\newtheorem{thm}{Theorem}
\newtheorem{lem}{Lemma}
\newtheorem{cor}{Corollary}
\theoremstyle{definition}
\newtheorem{defn}{Definition} 
\newtheorem{rem}{Remark}
\newtheorem*{conj}{Conjecture}
\newtheorem{prob}{Problem}
\renewcommand{\Re}{\mathbb R}
\newcommand{\B}{\mathbf B}
\renewcommand{\S}{\mathbb{S}}
\DeclareMathOperator{\bd}{bd}
\DeclareMathOperator{\dist}{dist}
\DeclareMathOperator{\conv}{conv}
\DeclareMathOperator{\card}{card}
\DeclareMathOperator{\diam}{diam}
\DeclareMathOperator{\Sym}{Sym}
\begin{document}
\title[On multiple Borsuk numbers]{On the multiple Borsuk numbers of sets}
\author[M. Hujter and Z. L\'angi]{Mih\'aly Hujter and Zsolt L\'angi}
\address{Mih\'aly Hujter, Dept.\ of Differential Equations, Budapest
University of Technology and Economics, Budapest, Egry J\'ozsef u. 1.,
Hungary, 1111}
\email{hujter@math.bme.hu}
\address{Zsolt L\'angi, Dept.\ of Geometry, Budapest University of
Technology and Economics, Budapest, Egry J\'ozsef u. 1., Hungary, 1111}
\email{zlangi@math.bme.hu}
\keywords{Borsuk's problem, diameter, diameter graph, covering, bodies of
constant width, multiple chromatic number.}
\subjclass{52C17, 05C15, 52C10}

\begin{abstract}
The \emph{Borsuk number} of a set $S$ of diameter $d >0$ in Euclidean $n$%
-space is the smallest value of $m$ such that $S$ can be partitioned into $m$
sets of diameters less than $d$. Our aim is to generalize this notion in the
following way: The \emph{$k$-fold Borsuk number} of such a set $S$ is the
smallest value of $m$ such that there is a $k$-fold cover of $S$ with $m$ sets of diameters
less than $d$. In this paper we characterize
the $k$-fold Borsuk numbers of sets in the Euclidean plane, give
bounds for those of centrally symmetric sets, smooth bodies and
convex bodies of constant width, and examine them for finite
point sets in the Euclidean $3$-space.
\end{abstract}

\maketitle

\section{Introduction}

In 1933, Borsuk \cite{B33} made the following conjecture.

\begin{conj}[Borsuk]
Every set of diameter $d > 0$ in the Euclidean $n$-space $\Re^n$ is the
union of $n+1$ sets of diameters less than $d$.
\end{conj}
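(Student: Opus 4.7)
The plan is to reduce the statement to the case of convex bodies of constant width $d$. By P\'al's completion theorem, every set $S \subset \Re^n$ of diameter $d$ is contained in some convex body $K$ of constant width $d$; any partition of $K$ into $n+1$ pieces of diameter less than $d$ restricts to such a partition of $S$. This reduction lets me work with a topologically well-behaved body whose supporting slabs all have width exactly $d$, a property that will be heavily exploited in the geometric step.

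In the plane ($n=2$), I would circumscribe a regular hexagon of width $d$ around $K$; such a hexagon exists precisely because $K$ has constant width $d$ in every direction. Cutting the hexagon by the three short diagonals meeting at its center yields three congruent pieces, each of diameter strictly less than $d$, and intersecting them with $K$ gives the required partition into $n+1=3$ parts.

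For $n=3$, I would try to adapt this construction using a regular octahedron (or a suitable truncation) circumscribed about $K$, and partition it into four pieces by three mutually perpendicular planes through the center, followed by a careful trimming of the corner pieces (in the spirit of Eggleston's and Heppes's arguments). The delicate point is to show that within each piece every pair of points is at distance strictly less than $d$; this requires a uniform strict inequality, not just $\leq d$, so one has to control the boundary behaviour of $K$ on the common faces.

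For general $n$, the main obstacle — and the step where I expect the plan to genuinely break down — is the absence of an analogue of the hexagon/octahedron: a universal polytope circumscribable about every body of constant width $d$ whose symmetries force a partition into $n+1$ small-diameter pieces. A naive induction on dimension by slicing $K$ with a hyperplane fails because the diameters of the two caps can both equal $d$, so one loses the crucial strict inequality. Since the paper takes Borsuk's statement only as motivation for its generalization to $k$-fold covers, I would at this point abandon the attempt at a uniform proof and pass to the multiplicity $k$ relaxation, where the extra covering freedom may compensate for the missing geometric rigidity in high dimensions.
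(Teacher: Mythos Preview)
The fundamental issue is that the statement you are attempting to prove is not a theorem in the paper at all: it is Borsuk's \emph{conjecture}, and the paper explicitly records (immediately after stating it) that it was \emph{disproved} by Kahn and Kalai in 1993. There is therefore no ``paper's own proof'' to compare against, and no correct proof can exist for general $n$. Your proposal implicitly treats the conjecture as an open or true statement whose proof merely eludes you in high dimensions; in fact counterexamples are known (finite point sets in sufficiently large $n$ whose Borsuk number exceeds $n+1$, indeed grows like $(1.225\ldots)^{\sqrt{n}}$, as the paper recalls in Section~\ref{sec:remarks}).

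Your low-dimensional sketches are historically accurate---the hexagon argument for $n=2$ and the octahedron-based argument for $n=3$ are essentially how those cases were settled---and your diagnosis that the inductive/circumscribed-polytope approach breaks down for general $n$ is correct. But the honest conclusion is not ``abandon the attempt and pass to the $k$-fold relaxation''; it is that the conjecture is \emph{false}, so any plan that purports to prove it for all $n$ must fail. The paper uses Borsuk's conjecture purely as historical motivation for introducing the $k$-fold Borsuk number, not as a result to be established.
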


From the 1930s, this conjecture has attracted a wide interest among
geometers. The frequent attempts to prove it led to results in a number of
special cases: for sets in $\Re^2$ and $\Re^3$, for smooth bodies or sets
with certain symmetries, etc., but the conjecture in general remained open
till 1993, when it was disproved by Kahn and Kalai \cite{K93}. Their
result did not mean that research on this problem stopped: the investigation
of the so-called \emph{Borsuk number} of a bounded set; that is, the minimum
number of pieces of smaller diameters that it can be partitioned into, is
still one of the fundamental problems of discrete geometry.

Since 1933, a large number of generalizations of Borsuk's problem has been
introduced. Without completeness, we list only a few. The \emph{generalized
Borsuk problem} asks to find, for a fixed value of $0 < r < 1$, the minimum
number $m$ such that any set of diameter one in $\Re^n$ can be partitioned
into $m$ pieces of diameters at most $r$ (cf., for example, \cite{G61}).
The \emph{cylindrical Borsuk problem} makes restrictions on the method of
partition (cf. \cite{H05}). Clearly, the
original problem is meaningful for sets in any metric space, for instance,
for finite dimensional normed spaces (cf. \cite{BMS97}) or for binary codes equipped with
Hamming distance. The latter one is called the \emph{$(0,1)$-Borsuk problem}%
, and is investigated, for example, in \cite{R99}, \cite{Z01} and \cite{R02}.
For more information on this problem and its generalizations, the reader is referred
to the survey \cite{R08}.

Our aim is to add another generalization to this list. Our main definition
is the following.

\begin{defn}
\label{defn:multipleBorsuk} Let $S \subset \Re^n$ be a set of diameter $d >
0 $. The smallest positive integer $m$ such that there is a $k$-fold
cover of $S$, with $m$ sets of diameters strictly less than $d$, is
called the \emph{$k$-fold Borsuk number} of $S$. We denote this number by $%
a_k(S)$.
\end{defn}

Recall that a $k$-fold cover of a set $S$ is a family of sets with the property that
any point of $S$ belongs to at least $k$ members of the family.
In this definition, we permit some members of the family to coincide.
We denote the Borsuk number of a set $S$ by $a(S)$. Clearly, $a_1(S) = a(S)$.

Note that Definition~\ref{defn:multipleBorsuk} can be naturally adapted to
almost any variant of the original Borsuk problem, and thus, raises many
open questions that are not examined in this paper. Our goal is to
investigate the properties of the $k$-fold Borsuk numbers of sets in $\Re^n$.

We start with three observations. Then in Section~\ref{sec:planar} we
characterize the $k$-fold Borsuk numbers of planar sets.
In Section~\ref{sec:smooth} we give estimates on the $k$-fold Borsuk numbers of smooth
bodies, centrally symmetric sets, and convex bodies of constant width, and
determine them for Euclidean balls.
In Sections~\ref{sec:3D} and \ref{sec:3Dsym} we examine the $k$-fold
Borsuk numbers of finite point sets in $\Re^3$.
In particular, in Section~\ref{sec:3D} we examine the sets with large $k$-fold Borsuk numbers,
and in Section~\ref{sec:3Dsym} we focus on sets with a nontrivial symmetry group.
Finally, in Section~\ref{sec:remarks} we make an additional remark and
raise a related open question.

During the investigation, $\mathbf{B}^n$ denotes the closed Euclidean unit
ball centered at the origin $o$, and $\S ^{n-1}= \bd \mathbf{B}^n$.

Our first observations are as follows.

\begin{rem}
\label{rem:subadditive} The sequence $a_k(S)$ is subadditive for every $S$.
More precisely, for any positive integers $k,l$, we have $a_{k+l}(S) \leq
a_k(S) + a_l(S)$.
\end{rem}

\begin{rem}
\label{rem:aSis2} For every set $S \subset \Re^n$ of diameter $d > 0$ and
for every $k\geq 1$, we have $a_k(S) \geq 2k$. Furthermore, for every value
of $k$, if $a(S) = 2$, then $a_k(S) = 2k$, and if $a(S) > 2$, then $a_k(S) >
2k$.
\end{rem}

\begin{proof}
Without loss of generality, we may assume that $S$ is compact. Let $[p,q]$
be a diameter of $S$. Since no set of diameter less than $d$ contains both $%
p $ and $q$, any $k$-fold cover of $S$ with sets of smaller diameters
has at least $2k$ elements. Furthermore, if $a(S) = 2$, then
by Remark~\ref{rem:subadditive}, $a_k(S) \leq 2k$ for every value of $k$.

Now assume that $a_k(S)=2k$ for some value of $k$. Let $A_1,A_2, \ldots,
A_{2k}$ be compact sets of diameters less than $d$ that cover $S$ $k$-fold.
or every $k$-element subset $J$ of $I=\{ 1,2, \ldots, 2k \}$, let $\bar{J}%
=I \setminus J$, and let $S_J$ be the (compact) set of points in $S$ that
are covered by $A_i$ for every $i \in J$. Clearly, the union of the sets $%
S_J $ is $S$, when $J$ runs over the $k$-element subsets of $I$. We define
two sets $A$ and $B$ in the following way: for any pair $J$ and $\bar{J}$,
we choose either $S_J$ or $S_{\bar{J}}$ to add to $A$, and we add the other
one to $B$. Then $S \subseteq A \cup B$.

Note that for any pair of points $p,q \in A$, there is an index $i$ such
that $p,q \in A_i$, and thus, $|p-q| \leq \diam A_i < d$. This yields that $%
\diam A < d$. We may obtain similarly that $\diam B < d$, which implies that 
$a(S) = 2$.
\end{proof}

\begin{rem}
\label{rem:boundary} Let $S \subset \Re^n$ be a set of positive diameter.
Then for every value of $k$, $a_k(S) = a_k(\bd S)$.
\end{rem}

\begin{proof}
Without loss of generality, we may assume that $S$ is compact and that $%
\diam S = 1$. Then, clearly, $a_k(S) \geq a_k(\bd S)$ for every $k$.

On the other hand, assume that some sets $Q_{1},Q_{2},\ldots ,Q_{m}$
form a $k$-fold cover of $\bd S$ where each $Q_{i}$ is of
diameter less than one. Without loss of generality, we may assume that $%
Q_{i}\subset S$ for every $i$. Let $\varepsilon >0$ be chosen in such a way
that $\diam Q_{i}<1-2\varepsilon $ for all values of $i$. Then the sets $%
\bar{Q}_{i}=(Q_{i}+\varepsilon \mathbf{B}^{n})\cap S$ form a $k$-fold cover
of $(\bd S+\varepsilon \mathbf{B}^{n})\cap S$ such that $\diam\bar{Q}_{i}<1$%
. Let $T$ denote the set $S\setminus (\bd S+\varepsilon \mathbf{B}^{n})$.
Observe that for any point $p\in T$ and $q\in S$, we have $|p-q|\leq
1-\varepsilon $. Thus, setting $Q_{i}^{\prime }=\bar{Q}_{i}\cup T$ for every 
$i$, we have $\diam Q_{i}^{\prime }\leq \max \{\diam\bar{Q}%
_{i},1-\varepsilon \}<1$, and the sets $Q_{1}^{\prime },Q_{2}^{\prime
}\ldots ,Q_{m}^{\prime }$ form a $k$-fold cover of $S$.
\end{proof}

By Remark~\ref{rem:boundary}, we may imagine the $k$-fold Borsuk number of a convex body $C$
as a painting of the surface of $C$, with $a_k(C)$ colors, such that the diameter of each patch
is less than $\diam C$, and any point on the surface is covered by at least $k$ layers.

Before starting our investigation, we recall two notions from graph theory which we are going to use in the proofs.
First, if $G$ is a graph, then the \emph{$k$-fold chromatic number} $\chi_k(G)$ of $G$ is the smallest integer $m$ with
the property that a $k$-element subset of $\{ 1,2, \ldots m \}$ (called \emph{colors})
can be assigned to each vertex of $G$ in such a way that if two vertices are connected by an edge, then the corresponding subsets are disjoint.
The second notion is the of the \emph{independence number} $\alpha(G)$ of a graph $G$: This number is the cardinality
of the largest subset of the vertex set $V(G)$ of $G$ in which no two edges are connected by an edge.
By the Pigeon-Hole Principle, we clearly have the following inequality.

\begin{rem}
\label{rem:independence}
For any graph $G$, we have
\[
\chi_k(G) \geq \frac{kV(G)}{\alpha(G)}.
\]
\end{rem}

\section{Sets in the Euclidean plane}
\label{sec:planar}

To formulate our main results, we first recall the well-known fact that for
any $S \subset \Re^n$ of diameter $d$, there is a convex body $K \subset
\Re^n$ of constant width $d$ such that $S \subseteq K$. The following
characterization of the Borsuk numbers of plane sets was given by
Boltyanskii (cf.\ \cite{B60}, or alternatively \cite{BG71}).

\begin{thm}[Boltyanskii]
Let $S \subset \Re^2$ be of diameter $d > 0$. The Borsuk number of $S$ is
three if, and only if, there is a unique convex body of constant width $d$,
containing $S$.
\end{thm}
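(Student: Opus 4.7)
The plan is to prove the equivalent form of the theorem: $a(S) = 2$ if and only if $S$ is contained in at least two distinct convex bodies of constant width $d$, which I shall call \emph{completions} of $S$. Since $a(S) \in \{2, 3\}$ by Borsuk's planar result together with Remark~\ref{rem:aSis2}, this is equivalent to the stated claim.

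For the easy direction, suppose $K_1 \neq K_2$ are two completions of $S$, and set $L := K_1 \cap K_2$. Then $L$ is convex, contains $S$, and has $\diam L = d$ (because $S \subseteq L \subseteq K_1$ forces $d \leq \diam L \leq d$). Moreover $L \subsetneq K_1$, since any point of $K_1 \setminus K_2$ witnesses this. Convex bodies of constant width are \emph{complete}, meaning they are maximal for inclusion among sets of their diameter, so this proper inclusion rules out $L$ itself being of constant width $d$. I would then invoke the classical convex case of the theorem---that a planar convex body of diameter $d$ has Borsuk number three precisely when it has constant width $d$---to conclude $a(L) \leq 2$, and hence $a(S) \leq a(L) \leq 2$.

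For the harder direction, I would argue contrapositively: assuming $S = A \cup B$ with closed sets $A, B$ of diameters less than $d$, I would construct two distinct completions of $S$. The strategy passes through a structural lemma: the completion $K$ of $S$ should be unique if and only if every boundary point of $K$ admits a \emph{diameter partner} in $S$, that is, a point of $S$ at distance exactly $d$. Granted this characterization, the cover $S = A \cup B$ extends to a cover of $K$ by assigning each $x \in \bd K$ to the color opposite that of its partner in $S$, and extending into the interior using compactness; this contradicts $a(K) = 3$ from the convex case.

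The main obstacle will be the rigorous execution of this last step. The partner relation on $\bd K$ is in general multivalued (for instance at the corners of a Reuleaux polygon), so the color assignment must be defined consistently on arcs of boundary points sharing common partners. Moreover, obtaining the strict inequalities $\diam A', \diam B' < d$ for the extended sets (rather than merely $\leq d$) requires a uniform-slack argument: because $\diam A, \diam B < d$, one may thicken $A$ and $B$ by a small $\varepsilon$-neighborhood, and the extension must be arranged to stay within this slack. I expect this delicate analysis of diameter pairs inside planar bodies of constant width to be the technical heart of the proof.
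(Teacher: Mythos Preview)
The paper does not prove this theorem: it is stated with attribution to Boltyanskii and a reference to \cite{B60} and \cite{BG71}, and then used as input for the paper's own Theorem~\ref{thm:extension}. So there is no proof in the paper to compare your proposal against.

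That said, your sketch for the ``harder direction'' is very close in spirit to what the paper does in the proof of Theorem~\ref{thm:extension}, which shows $a_k(S)=a_k(C)$ whenever the completion $C$ is unique. There the key tool is precisely Boltyanskii's Lemma (Lemma~\ref{lem:Boltyanskii}): every point of $(\bd C)\setminus S$ lies on an open circular arc of radius $d$ whose centre $p$ is a non-smooth boundary point, hence $p\in S$. This is the concrete form of your ``every boundary point has a diameter partner in $S$'' claim, and the paper's extension of covers from $S$ to $\bd C$ via the half-arcs $A_j^q,A_j^r$ is exactly the consistent multivalued colour assignment you anticipate needing. So if you want to carry out your plan, Lemma~\ref{lem:Boltyanskii} is the statement to prove and use, rather than the biconditional you wrote; your version as stated (``unique completion $\Leftrightarrow$ every boundary point has a partner in $S$'') would still require justification in both directions and is not quite what is needed.

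One further comment on your ``easy direction'': invoking the convex case (a planar convex body has Borsuk number three iff it has constant width) is legitimate, since that statement has an elementary independent proof (cut perpendicular to a direction of sub-maximal width), but you should make clear you are citing it as a separate prior result and not as a special case of the theorem you are proving.
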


Now we prove the following.

\begin{thm}\label{thm:extension} Let
$S \subset \Re^2$ be a set of diameter $d>0$ with $%
a(S) = 3$, and let $C$ be the unique plane convex body of constant width $d$
that contains $S$. Then for every value of $k$, we have $a_k(S) = a_k(C)$.
\end{thm}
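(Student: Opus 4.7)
The inequality $a_k(S)\le a_k(C)$ is immediate: restricting any $k$-fold cover of $C$ by sets of diameter less than $d$ to the subset $S$ yields a $k$-fold cover of $S$ of the same size and diameter bound. The substance of the theorem is therefore the reverse inequality, which I plan to establish by extending a given $k$-fold cover of $S$ to one of $C$. Let $\{Q_1,\dots,Q_m\}$ be such a cover of $S$ with each $\diam Q_i<d$; after passing to closures in $S$, we may assume each $Q_i$ is compact, so $\diam Q_i\le d-\epsilon$ for a uniform $\epsilon>0$.

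The key structural input is the rigidity forced by $a(S)=3$: I claim that every point $x\in\bd C$ satisfies $|x-y|=d$ for some $y\in S$, and that $\bd C$ decomposes into maximal circular arcs of radius $d$ whose centers lie in $S$. Indeed, were some $x_0\in\bd C$ to lack an $S$-antipode, a local P\'al-type modification near $x_0$ would produce a distinct body of constant width $d$ containing $S$, contradicting the uniqueness of $C$. For such an arc $\alpha\subset\bd C$ with center $y(\alpha)\in S$, every interior point of $\alpha$ has $y(\alpha)$ as its unique antipode in $C$; hence the indices $i$ for which $Q_i$ can absorb points of $\alpha$ without creating a distance-$d$ pair are exactly those in $I(\alpha):=\{i:y(\alpha)\notin Q_i\}$.

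By Remark~\ref{rem:aSis2} we have $m>2k$, and by trimming redundant memberships (which preserves the $k$-fold property and does not increase any $\diam Q_i$) we may assume each center $y(\alpha)$ lies in exactly $k$ sets of the cover, so $|I(\alpha)|=m-k\ge k+1$. I would then distribute each arc $\alpha$ among $k$ chosen indices of $I(\alpha)$---splitting $\alpha$ into sub-arcs if necessary so that each contributing $Q'_i$ stays of diameter less than $d$---yielding enlargements $Q_i\subseteq Q'_i\subseteq C$ that $k$-fold cover $\bd C$. The interior of $C$ is then handled via Remark~\ref{rem:boundary}. The main obstacle, which is the technical heart of the proof, is the global consistency of these local extensions: a single $Q'_i$ may acquire sub-arcs from several different $\alpha$'s, possibly from far-apart regions of $\bd C$, and one must verify that the union together with $Q_i$ still has diameter less than $d$. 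I expect this to be resolved by choosing the sub-arc assignments compatibly at arc endpoints (which lie in $S$ and already belong to specific $Q_i$'s), exploiting the Reuleaux-type rigidity of the diameter graph of $S$ forced by $a(S)=3$.
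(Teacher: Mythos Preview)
Your framework is correct, and you end up pointing at the right idea in your last sentence, but the assignment rule you actually propose---adding pieces of an arc $\alpha$ to those $Q_i$ with $y(\alpha)\notin Q_i$---does not by itself control the diameter. The obstruction is not ``far-apart regions'' but the \emph{endpoints} of $\alpha$: these lie in $S$ and may have $S$-antipodes other than $y(\alpha)$. Concretely, let $S=\{v_1,v_2,v_3\}$ be the vertices of a Reuleaux triangle, $k=1$, and $Q_i=\{v_i\}$. The arc $\alpha_1$ opposite $v_1$ has $y(\alpha_1)=v_1\notin Q_2$, yet $\diam(Q_2\cup\alpha_1)=d$, since $\alpha_1$ contains points arbitrarily close to $v_3$ and $|v_2-v_3|=d$. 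So center non-membership is necessary but not sufficient, and the trimming step together with the count $|I(\alpha)|\ge k+1$ does not address this.

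The paper's proof uses exactly the endpoint-based rule you hint at. After a $\delta$-thickening $\bar Q_i=(Q_i+\delta\mathbf{B}^2)\cap C$ (which already $k$-fold covers all but finitely many ``long'' components $A_j$ of $(\bd C)\setminus S$, each with endpoints $q_j,r_j\in S$ and center $p_j$), one splits each $A_j$ at its midpoint into halves $A_j^q,A_j^r$ and adds $A_j^q$ to \emph{every} $\bar Q_i$ with $q_j\in\bar Q_i$ (and symmetrically for $r_j$). Because $q_j\in\bar Q_i$ and $\diam\bar Q_i<d$, no antipode of $q_j$---in particular $p_j$---can lie in $\bar Q_i$; hence the enlarged set contains no distance-$d$ pair, and the ``global consistency'' obstacle dissolves. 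The $k$-fold coverage of each $A_j$ is automatic since $q_j,r_j\in S$ already lie in at least $k$ of the $\bar Q_i$. One more point: your claim that every $x\in\bd C$ has an $S$-antipode is not needed (and is not obviously true for $x\in S\cap\bd C$, where your P\'al-type argument is not straightforward). What is actually used---and is precisely Boltyanskii's Lemma~\ref{lem:Boltyanskii}---is that every point of $(\bd C)\setminus S$ lies on an open radius-$d$ arc whose center is a non-smooth point of $\bd C$ and hence belongs to $S$.
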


Our proof is based on the following lemma, used by Boltyanskii (cf. for
example, Lemma 8, p. 29, \cite{BG71})

\begin{lem}[Boltyanskii]
\label{lem:Boltyanskii}

For any point $u \in (\bd C) \setminus S$, there is an open
circle arc of radius $d$ in $\bd C$, that contains $u$, such that the center $p$ of the
circle is contained in $\bd C$.
\end{lem}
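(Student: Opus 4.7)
The plan is to show that at any point $u \in \bd C \setminus S$ the antipodal map of $C$ must be locally constant near $u$, and that this constancy is exactly the assertion of the lemma.

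\textbf{Setup and reduction.} I would first replace $S$ by its closure, which does not affect the diameter, the hypothesis $a(S) = 3$, or the body $C$; this makes $\bd C \setminus S$ relatively open. Fix $u \in \bd C \setminus S$ and an open arc $U \subset \bd C$ through $u$ with $U \cap S = \emptyset$. I would then invoke the standard facts that a plane body of constant width $d$ has $C^{1}$ boundary and a continuous involution $\phi : \bd C \to \bd C$ (the antipodal map) with $|v - \phi(v)| = d$ for every $v \in \bd C$. The proof reduces to showing that $\phi$ is constant on some open subarc $U' \subset U$ through $u$: if $\phi \equiv p$ on $U'$, then every point of $U'$ lies on the circle of radius $d$ centered at $p$, so $U'$ is the desired open circular arc, and $p = \phi(u) \in \bd C$ is the desired center.

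\textbf{Local constancy via uniqueness.} To prove local constancy of $\phi$ I would argue by contradiction using the uniqueness of $C$ provided by Boltyanskii's theorem together with $a(S) = 3$. Suppose $\phi$ is not locally constant at $u$. Then on a small subarc $\gamma \subset U$ through $u$, the antipodal image $\gamma^{*} = \phi(\gamma)$ is also a non-degenerate arc of $\bd C$. I would construct a distinct convex body $C' \ne C$ of constant width $d$ still containing $S$ via a small P\'al-type perturbation that modifies the paired arcs $\gamma, \gamma^{*}$ in a coordinated manner while fixing $\bd C$ outside a small collar. Because the modification is supported near $\gamma$, which avoids $S$, the body $C'$ still contains $S$ and is therefore a second constant-width completion of $S$, contradicting uniqueness.

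\textbf{Main obstacle.} The delicate step is the paired deformation of $\gamma$ and $\gamma^{*}$. One must perturb both arcs simultaneously so that for every direction the two parallel supporting lines of $C'$ remain at distance exactly $d$, the new boundary is convex, and no chord exceeds $d$. The additional complication that $\gamma^{*}$ may meet $S$, in which case the modification on the antipodal side is blocked, has to be treated separately; in that case one argues directly that the diametral structure between $\gamma$ and $\gamma^{*} \subset S$ already forces a circular arc on $\bd C$ through $u$. Carrying out these technicalities is the geometric heart of Boltyanskii's original argument in \cite{BG71}.
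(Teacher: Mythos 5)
The paper does not actually prove this lemma; it quotes it from Boltyanskii and Gohberg (\cite{BG71}, Lemma~8, p.~29), so there is no in-paper argument to compare yours with. Your overall strategy --- a local modification of $C$ supported near $u$ would yield a second constant-width completion of $S$, contradicting the uniqueness guaranteed by $a(S)=3$ --- is indeed the classical one. However, your write-up rests on a false claim: a plane convex body of constant width does \emph{not} in general have $C^{1}$ boundary, and the antipodal correspondence is not a single-valued continuous involution. A Reuleaux triangle has three corners, and at a corner the set of points of $C$ at distance $d$ is a whole circular arc. This is not a removable technicality here, because the conclusion of the lemma is precisely that $u$ lies in the relative interior of such a nondegenerate antipodal arc of a \emph{corner} $p$; if $\bd C$ were $C^{1}$, every antipodal set would be a singleton and the conclusion of the lemma could never hold for any $u$. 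So the object whose ``local constancy'' you want must be set up as the set-valued antipodal map (equivalently, via normal cones and the support function), not as a continuous involution $\phi$.

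Beyond that, the two steps you yourself flag as the ``geometric heart'' are exactly the content of the lemma, and you give no argument for either. The paired deformation of $\gamma$ and $\gamma^{*}$ is delicate: in support-function terms you need a perturbation $g$ with $g(\theta)+g(\theta+\pi)=0$ that keeps the boundary convex (the surface-area measure nonnegative), which can fail when corners are present, and one must also verify that no new chord of length $\geq d$ is created. More importantly, the case $\gamma^{*}\cap S\neq\emptyset$, where the antipodal side cannot be moved, is the main case of the lemma and is dispatched in a single sentence with no actual argument. As it stands the proposal is a plausible plan with the correct contradiction target, but with one false supporting claim and its essential geometric steps missing.
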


\begin{proof}[Proof of Theorem~\protect\ref{thm:extension}]
Without loss of generality, let $S$ be compact, and $d=1$. Clearly, $a_k(C)
\geq a_k(S)$. Hence, by Remark~\ref{rem:boundary}, it suffices to show that $%
a_k(\bd C) \leq a_k(S)$.

Assume that $Q_1, Q_2, \ldots, Q_m \subset S$ are sets of diameters less
than one that form a $k$-fold cover of $S$. Without loss of generality, we
may assume that $Q_i \subset C$ for every value of $i$.

Let $\delta>0$ be chosen in a way that $\diam Q_i + 2\delta < 1$ for every
value of $i$. Let $A_j$, where $j=1,2,\ldots, t$ denote the connected
components of $(\bd C) \setminus S$ longer than $2\delta$, and let $q_j$ and 
$r_j$ be the two endpoints of $A_j$. Clearly, there are finitely many such
arcs. First, we note that the sets $\bar{Q}_i=(Q_i + \delta \mathbf{B}^2)
\cap C$ form a $k$-fold cover of $(\bd C) \setminus \left( \sum_{j=1}^t A_j
\right)$, and that the diameter of any of these sets is less than one. We
extend the sets $\bar{Q}_1, \bar{Q}_2, \ldots, \bar{Q}_m$ to cover $k$-fold
all the $A_j$s.

Using Lemma~\ref{lem:Boltyanskii}, for every value of $j$, $A_j$ is an open
unit circle arc with its center $p_j \in \bd C$. Since $C$ is contained in
the intersection of the two unit disks $q_j + \mathbf{B}^2$ and $r_j + 
\mathbf{B}^2$, we obtain that $p_j$ is not a smooth point of $\bd C$, which
yields, by the same lemma, that $p_j \in S$.

Let $A_j^q$ be the set of the points of $A_j$ that are not farther from $q_j$
than from $r_j$. We define $A_j^r$ analogously. Note that for any $u \in A_j$%
, the only point of $C$ at distance one from $u$ is $p_j$. Hence, for any
index $i$ such that $q_j \in \bar{Q}_i$, $p_j$ has a neighborhood disjoint
from $\bar{Q}_i$. This yields that $\diam (\bar{Q}_i \cup A_j^q) < 1$. We
may obtain similarly that if $r_j \in \bar{Q}_i$, then $\diam (\bar{Q}_i
\cup A_j^r) < 1$. Thus, we set 
\begin{equation*}
C_i = \conv\left( \bar{Q}_i \cup \bigcup_{q_j \in \bar{Q}_i} A_j^q \cup
\bigcup_{r_j \in \bar{Q}_i} A_j^r \right),
\end{equation*}
and observe that, by induction on $j$, $\diam C_i < 1$ for every value of $i$, and
that the sets $C_i$ form a $k$-fold cover of $\bd C$.
\end{proof}

Now let us recall the notion of \emph{Reuleaux polygons}. These polygons are
constant width plane convex bodies bounded by finitely many circle arcs of
the same diameter (of radius equal to the width of the body), called the 
\emph{sides} of the polygon. It is a well-known fact that any such polygon
has an odd number of sides (cf.\ \cite{BF48} or \cite{F60}).

\begin{thm}
\label{thm:classification} Let $C$ be a constant width convex body in $\Re^2$%
, and $k$ be a positive integer. If $C$ is a Reuleaux-polygon with $2s+1$
sides, then $a_k(C) = 2k + \left\lceil \frac{k}{s} \right\rceil$, and
otherwise $a_k(C) = 2k+1$.
\end{thm}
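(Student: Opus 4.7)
The proof splits into two cases. For a Reuleaux polygon I reduce to a finite-vertex computation via Theorem~\ref{thm:extension}, while for a general non-polygonal constant width body I use Remark~\ref{rem:aSis2} for the lower bound and an explicit arc construction for the upper bound.

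\textbf{Case 1 ($C$ is a Reuleaux polygon with $2s+1$ sides).} Let $V=\{v_{0},v_{1},\dots ,v_{2s}\}$ be the vertex set of $C$ in cyclic order along $\bd C$. Each side of $C$ is a circular arc of radius $d$ whose centre is a vertex of $C$ and whose two endpoints are consequently at distance $d$ from that centre; reading off all such distance-$d$ pairs, the diameter graph $G$ on $V$ is the odd cycle on $2s+1$ vertices. For any finite set the subsets of diameter strictly less than $d$ are exactly the independent sets of its diameter graph, so $a_{k}(V)=\chi _{k}(G)$. With $\alpha (G)=s$ and $|V|=2s+1$, Remark~\ref{rem:independence} gives
\[
\chi _{k}(G)\;\ge \;\left\lceil \frac{k(2s+1)}{s}\right\rceil \;=\;2k+\left\lceil \frac{k}{s}\right\rceil ,
\]
and a standard cyclic $k$-fold colouring of the odd cycle realises equality. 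Since $\chi (G)=3$ we have $a(V)=3$, so Boltyanskii's theorem identifies $C$ as the unique constant width body containing $V$; Theorem~\ref{thm:extension} applied with $S=V$ then gives $a_{k}(C)=a_{k}(V)=2k+\lceil k/s\rceil $.

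\textbf{Case 2 ($C$ is not a Reuleaux polygon).} For the lower bound, $C$ itself is the unique constant width body of width $d$ containing $C$ (any such body has the same support function in every direction), so Boltyanskii's theorem gives $a(C)=3$ and Remark~\ref{rem:aSis2} yields $a_{k}(C)\ge 2k+1$. For the upper bound I would construct a $k$-fold cover of $\bd C$ by $2k+1$ arcs of diameter less than $d$ and invoke Remark~\ref{rem:boundary}. Parameterise $\bd C$ by the outer normal angle $\theta \in \Re /2\pi \Z$, under which the antipodal involution on $\bd C$ becomes $\theta \mapsto \theta +\pi $, and take $2k+1$ closed arcs of $\theta $-length $\tfrac{2\pi k}{2k+1}<\pi $, evenly spaced around the parameter circle. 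No arc contains an antipodal pair, and every point of $\bd C$ lies in at least $k$ of them. Because $C$ is not a Reuleaux polygon, the common starting phase of the arc placement can be chosen to avoid the finitely (or at worst countably) many corner-versus-opposite-arc pairs supplied by Lemma~\ref{lem:Boltyanskii}, which are the only other pairs at distance $d$ on $\bd C$; hence each arc has diameter strictly less than $d$.

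\textbf{Main obstacle.} The delicate step is the upper bound in Case~2 for bodies with corners that are still not Reuleaux polygons. One must confirm that the hypothesis ``not a Reuleaux polygon'' really does supply the angular slack needed to rotate the $2k+1$ covering arcs so that no arc contains a vertex of $\bd C$ together with a point of its opposite $d$-arc. In a Reuleaux polygon this slack vanishes, which is precisely why the bound there jumps to $2k+\lceil k/s\rceil $.
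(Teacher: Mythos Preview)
Your Case~1 is correct, and in fact a bit slicker than the paper's version: the paper does not invoke Theorem~\ref{thm:extension} here but instead builds the $k$-fold cover of $\bd C$ by hand, extending a $k$-fold colouring of the vertices to the adjacent half-sides. Your reduction to $a_k(V)$ via Theorem~\ref{thm:extension} is a clean shortcut.

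The gap you flag in Case~2 is real, and the \emph{evenly spaced} constraint is precisely what creates it. Once you fix the arc lengths to $\tfrac{2\pi k}{2k+1}$ and allow only a global rotation, you must argue that some rotation keeps every arc from meeting both $G(v)$ and $G(v)+\pi$ for every corner $v$. A measure count shows this is tight: corners with Gaussian image of length $>\tfrac{\pi}{2k+1}$ each exclude an interval of rotations, and when the total corner mass is close to $\pi$ these exclusions can fill the available rotation space. Arguing that ``not a Reuleaux polygon'' forces strict slack is possible but delicate, and you have not done it.

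The paper resolves this by \emph{dropping the evenly-spaced requirement}. It chooses $2k+1$ directions $p_1,\dots,p_{2k+1}\in\S^1$ (with antipodes $q_i$) so that no Gaussian image $G(v)$ contains two of the $4k+2$ points, and then takes the arcs $F_i\subset\bd C$ from $G^{-1}(p_i)$ to $G^{-1}(p_{i+k})$; because any two diameters of $C$ intersect, no such $F_i$ contains a diametral pair. The existence of suitable $p_i$ comes from a simple dichotomy: if $C$ has at least $2k+1$ corners, pick one $p_i$ in the interior of each of $2k+1$ distinct Gaussian images (their antipodes then land in singleton images automatically); if $C$ has fewer than $2k+1$ corners, then since $C$ is not a Reuleaux polygon the finitely many arcs $G(v)\cup(-G(v))$ do not cover $\S^1$, so there is an open arc $I$ with both $I$ and $-I$ decomposed into singleton Gaussian images, and one may place all $p_i$ in $I$. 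This two-case choice replaces your rotation argument entirely and makes the ``not a Reuleaux polygon'' hypothesis enter cleanly, exactly at the point where the finitely-many-corners case would otherwise force $\S^1$ to be covered.
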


\begin{proof}
For simplicity, assume that $\diam C = 1$.

First, consider the case that $C$ is a Reuleaux polygon with $2s+1$ sides.
Let us call the common point of two consecutive sides of $C$ a \emph{vertex}
of $C$. Let these vertices be $p_1, p_2, \ldots, p_{2s+1}=p_0$ in counterclockwise
order in $\bd C$. Consider the diameter graph of the vertex set of $C$: The
vertices of this graph are the vertices of $C$, and two vertices are
connected with an edge if, and only if, they are endpoints of a diameter.
Clearly, this graph is the cycle $C_{2s+1}$, of length $(2s+1)$. By \cite%
{S76} (see also \cite{S95}), the $k$-fold chromatic number of $C_{2s+1}$ is $%
m=2k + \left\lceil \frac{k}{s} \right\rceil$. This implies that $a_k(C) \geq
m$. We note that the inequality $m \geq 2k + \left\lceil \frac{k}{s} \right\rceil$
also follows from Remark~\ref{rem:independence}.

Now we show the existence of some sets $Q_1, Q_2, \ldots, Q_m$, with
diameters less than one, that form a $k$-fold cover of $\bd C$. This, by
Remark~\ref{rem:boundary}, yields the assertion for Reuleaux-polygons. Let $%
A_1, A_2, \ldots, A_m$ be sets of diameters less than one that form a $k$%
-fold cover of the vertices of $C$. Let $\widehat{p_ip_{i+1}}$ denote the
side of $C$ connecting $p_i$ and $p_{i+1}$, and let $G_i$ be the union of
the points of the two sides $\widehat{p_ip_{i-1}}$ and $\widehat{p_ip_{i+1}}$
that are not farther from $p_i$ than from $p_{i-1}$ and $p_{i+1}$,
respectively. Observe that the sets $Q_j = \bigcup_{p_i \in A_j} G_j$ form a 
$k$-fold cover of $\bd C$, and that their diameters are strictly less than
one, which readily implies the assertion.

In the remaining part we assume that $C$ is \emph{not} a Reuleaux-polygon.
By Remark~\ref{rem:aSis2}, we have that $a_k(C) \geq 2k+1$. Hence, it
suffices to construct a family of $2k+1$ sets of diameters less than one
that form a $k$-fold cover of $C$ or, by Remark~\ref{rem:boundary}, $\bd C$.

First, we carry out this construction for $C=\frac{1}{2}\mathbf{B}^{2}$.
Consider $2k+1$ distinct diameters of $C$. Let these be $%
[p_{1},q_{1}],[p_{2},q_{2}],\ldots ,[p_{2k+1},q_{2k+1}]$, where the notation
is chosen in such a way that the points $p_{1},p_{2},\ldots ,p_{2k+1}=p_{0}$
and $q_{1},q_{2},\ldots ,q_{2k+1}=q_{0}$ are in counterclockwise order in $\bd C$%
, and for every $i$, the shorter arc connecting $p_{i}$ and $p_{i+1}$
contains exactly one of the $q_{j}$s (by exclusion, this point is $q_{i+k}$,
cf. Figure~\ref{fig:circle}).
Observe that the points $p_{i}$ have the property that the diameter
containing any one of them divides $\bd C$ into two open half circles, each
of which contains exactly $k$ of the remaining $2k$ points. Let $A_{i}$
denote the shorter arc in $\bd C$ connecting $p_{i}$ and $p_{i+k}$. Observe
that these sets form a $k$-fold cover of $\bd C$, and that their diameters
are less than one.

\begin{figure}[here]
\includegraphics[width=0.4\textwidth]{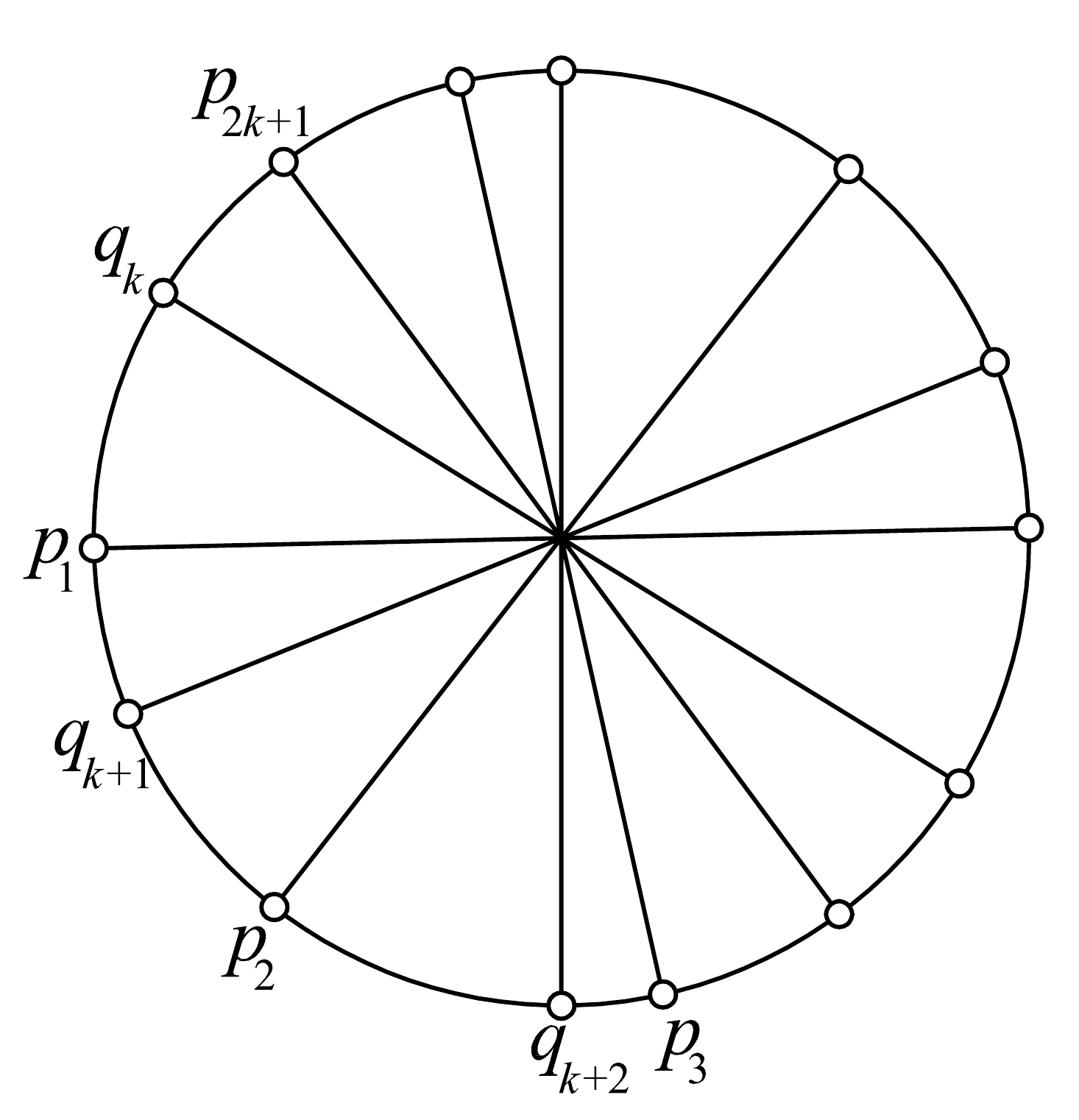}
\caption[]{Covering a Euclidean disk}
\label{fig:circle}
\end{figure}

In the last step, we show that a similar family can be constructed for any $%
C $ that is not a Reuleaux-polygon. Before we do that, we recall the
following simple property of plane convex bodies of constant width:

\begin{itemize}
\item No two diameters of a plane convex body of constant width are disjoint.
\end{itemize}

For any $p \in \bd C$, let $G(p) \subset \S ^1$ be the \emph{Gaussian image}
of $p$; that is, the set of the external unit normal vectors of the lines
supporting $C$ at $p$. Observe that if $G(p)$ is not a singleton, then it is
a closed arc in $\S ^1$. Furthermore, in this case $p$ is not a smooth point
of $\bd C$, and thus, by Lemma~\ref{lem:Boltyanskii} in the previous proof,
the locus of the other endpoints of the diameters starting at $p$ is a
closed unit circle arc in $\bd C$. Apart from the endpoints, the points of
this arc are smooth points of $\bd C$, and thus, their Gaussian images are
singletons. This yields that if $G(p)$ is not a singleton, then, apart from
its endpoints, the points of $-G(p)$ are decomposed into singleton Gaussian
images.

Since $C$ is not a Reuleaux-polygon, we may choose $2k+1$ diameters of $%
\mathbf{B}^2$, say $[p_1,q_1], [p_2,q_2], \ldots, [p_{2k+1},q_{2k+1}]$, such
that the Gaussian image of any point of $\bd C$ intersects at most one of
them. Indeed, as $\S ^1$ is not covered by the union of finitely many
Gaussian images and their antipodal arcs, at least one of the following
holds:

\emph{Case 1}, There are at least $2k+1$ Gaussian images in $\S ^1$ that are
not singletons: then we may choose $2k+1$ such arcs, and pick one point
from each, different from the endpoints of the arc, as an endpoint of one of
the chosen diameters.

\emph{Case 2}, There are less than $2k+1$ Gaussian images that are not
singletons. In this case there is an open arc $I \subset \S ^1$ such that
both $I$ and $-I$ are decomposed into singleton Gaussian images. Thus, we
may choose $2k+1$ pairwise distinct diameters with all their endpoints in $I
\cup (-I)$.

Let us label the endpoints of these diameters as in the case that $C$ is a
Euclidean disk. That is, assume that the points $p_1, p_2, \ldots,
p_{2k+1}=p_0$ are in counterclockwise order in $\bd C$, and for every $i$, the
(counterclockwise) directed arc connecting $p_i$ and $p_{i+1}$ contains exactly
$q_{i+k}$ from amongst the $q_j$s.
Let $G^{-1}(u)$ denote the (unique) point $v$ of $\bd C$
with the property that $u \in G(v)$. For every $i$, let $F_i$ denote the
closed arc of $\bd C$, with endpoints $G^{-1}(p_i)$ and $G^{-1}(p_{i+k})$,
and containing $G^{-1}(p_{i+j})$ for $j=1,2,\ldots,k-1$. Observe that since
any two diameters of $C$ have a nonempty intersection, no arc $F_i$ contains
the endpoints of a diameter, and thus, $\diam F_i < 1$. On the other hand,
these arcs form a $k$-fold cover of $\bd C$, which implies that $a_k(C) =
2k+1$.
\end{proof}

\section{Centrally symmetric sets and smooth bodies}

\label{sec:smooth}

Two of the special cases for which Borsuk's original conjecture is proven,
are when the set is centrally symmetric, or is a smooth convex body
(cf. \cite{R71}, \cite{H45} and \cite{H46}). The proofs in both cases are
based on reducing the problem to the Euclidean $n$-ball $\mathbf{B}^n$, and
then to finding $a(\mathbf{B}^n)$. In this section, we investigate the $k$%
-fold Borsuk numbers of these sets in the same way.

For preciseness, we first remark that we call a set $S \subset \Re^n$ a \emph{%
smooth body}, if $S$ is homeomorphic to $\mathbf{B}^n$, and its boundary is
a $C^1$-class submanifold of $\Re^n$.

\begin{thm}
\label{thm:smooth} Let $S \subset \Re^n$ be a set of diameter $d > 0$.

\begin{itemize}
\item[(1)] If $S$ is a smooth body or centrally symmetric, then for every $k$%
, we have $a_k(S) \leq a_k(\mathbf{B}^n)$.

\item[(2)] If $S$ is a convex body of constant width, then for every $k$, we
have $a_k(S) \geq a_k(\mathbf{B}^n)$.

\item[(3)] For every $k$, we have $a_k(\mathbf{B}^n) = 2k+n-1$.
\end{itemize}
\end{thm}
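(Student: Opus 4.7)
For (1), in the centrally symmetric case I would translate the centre of symmetry to the origin; then $x \in S$ implies $-x \in S$ and so $|x| \leq d/2$, giving $S \subset (d/2) \mathbf{B}^n$. Any optimal $k$-fold cover of this ball by pieces of diameter $<d$ restricts to such a cover of $S$, so $a_k(S) \leq a_k(\mathbf{B}^n)$. For the smooth case I would use the continuous Gauss map $\nu \colon \bd S \to \S^{n-1}$; by Remark~\ref{rem:boundary} I may pick a $k$-fold cover $A_1, \ldots, A_m$ of $\S^{n-1}$ by $m = a_k(\mathbf{B}^n)$ closed antipodal-free sets and pull it back via $\nu$. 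If $p, q \in \nu^{-1}(A_i)$ realised $|p-q| = d$, they would form a diameter of $S$, and smoothness would force $\nu(q) = -\nu(p)$, contradicting antipodal-freeness of $A_i$; compactness then gives $\diam \nu^{-1}(A_i) < d$, and Remark~\ref{rem:boundary} returns the cover to $S$.

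For (2), I would let $Q_1, \ldots, Q_m$ be a closed $k$-fold cover of the constant-width body $C$ with $\diam Q_i < d$, and for each $u \in \S^{n-1}$ choose a support point $p(u) \in \bd C$ with $\langle p(u),u\rangle = h_C(u)$. Setting $Q_i' = \{u : p(u) \in Q_i\}$ and passing to closures, the $\overline{Q_i'}$ still $k$-fold cover $\S^{n-1}$. If antipodes $u, -u$ both lay in some $\overline{Q_i'}$, then extracting convergent subsequences and using continuity of the support function would produce $p^\ast, q^\ast \in Q_i$ that are support points of $C$ in the opposite directions $u$ and $-u$; the constant-width identity $h_C(u) + h_C(-u) = d$ combined with $|p^\ast - q^\ast| \leq \diam C$ would force $|p^\ast - q^\ast| = d$, contradicting $\diam Q_i < d$. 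Hence each $\overline{Q_i'}$ is antipodal-free, which gives $a_k(\S^{n-1}) \leq m$, and Remark~\ref{rem:boundary} yields $a_k(\mathbf{B}^n) \leq a_k(C)$.

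For the lower bound in (3), I would argue by contradiction: assume a closed antipodal-free $k$-fold cover $A_1, \ldots, A_m$ of $\S^{n-1}$ with $m \leq 2k + n - 2$, and apply the Borsuk--Ulam theorem to $f \colon \S^{n-1} \to \Re^{n-1}$, $f(u)_i = \dist(u, A_i)$ for $i = 1, \ldots, n-1$, to produce $u_0$ with $f(u_0) = f(-u_0)$. If $u_0 \in A_i$ for some $i \leq n-1$, then $-u_0 \in A_i$ too (distance zero to a closed set), a contradiction; otherwise both $u_0$ and $-u_0$ lie in at least $k$ of the remaining $m - (n-1) \leq 2k - 1$ sets, and the pigeonhole principle yields some $A_j$ among them containing both, again a contradiction. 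For the upper bound I would place $2k + n - 1$ unit vectors $v_1, \ldots, v_{2k+n-1}$ on $\S^{n-1}$ so that every open hemisphere contains at least $k$ of them (generalising the regular $(2k+1)$-gon configuration from Theorem~\ref{thm:classification}), and for sufficiently small $\varepsilon > 0$ take $A_j = \{u \in \S^{n-1} : \langle u, v_j \rangle \geq \varepsilon\}$. Each $A_j$ is then a closed spherical cap strictly inside the open hemisphere around $v_j$, hence antipodal-free, and the hemisphere condition yields $k$-fold coverage, so $a_k(\mathbf{B}^n) = a_k(\S^{n-1}) \leq 2k + n - 1$ by Remark~\ref{rem:boundary}.

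The main obstacle will be this last construction: symmetric candidates like the vertices of the regular octahedron already fail for $n = 3$, $k = 2$, so one must resort to a careful perturbation or an inductive lifting from a lower-dimensional configuration, and must verify that multiplicity remains at least $k$ at the exceptional directions $u$ where several inner products $\langle u, v_j \rangle$ vanish at once.
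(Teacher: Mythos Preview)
Your approach is essentially identical to the paper's throughout: the Gauss map for smooth bodies, containment in $(d/2)\mathbf{B}^n$ for centrally symmetric sets, the support-point correspondence for constant-width bodies, and the Borsuk--Ulam function $f(u) = (\dist(u,A_1),\ldots,\dist(u,A_{n-1}))$ for the lower bound in (3). The paper disposes of your acknowledged obstacle for the upper bound simply by citing Gale (Theorem~II$'$ in \cite{Ga56}), which supplies exactly the configuration you are looking for: $2k+n-1$ open hemispheres that $k$-fold cover $\S^{n-1}$ --- equivalently, $2k+n-1$ points on $\S^{n-1}$ such that every open hemisphere contains at least $k$ of them --- after which a slight shrinking yields the antipodal-free closed caps.
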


Clearly, this theorem implies that if $S$ is a smooth convex body of
constant width, then for every $k$, $a_k(S) = 2k+n-1$.

\begin{proof}
Let $d = 1$.

First we examine the case that $S$ is a smooth body. For every $p \in \bd S$%
, let $G(p)$ denote the Gaussian image of $p$; that is, the unique external
unit normal vector of $S$ at $p$. Then $G : \bd S \to \S ^{n-1}$ is a
continuous mapping. Observe that if $[p,q]$ is a diameter of $S$, then $p,q
\in \bd S$, and $G(p)=-G(q)=p-q$. Thus, any $k$-fold cover of $\S ^{n-1}$ by 
$m$ sets of smaller diameters induces a $k$-fold cover of $\bd S$, and thus $%
S$, by $m$ sets of smaller diameters. This shows that $a_k(S) \leq a_k(%
\mathbf{B}^n)$.

Now, assume that $S$ is a (not necessarily smooth) convex body of constant
width. Then every point of $\bd S$ is an endpoint of some diameter, and
thus, a $k$-fold cover of $\bd S$ induces a $k$-fold cover of $\S ^{n-1}$
like in the previous paragraph. This yields $a_k(S) \geq a_k(\mathbf{B}^n)$
for every $k$.

Next, let $S$ be symmetric to the origin. Observe that $S \subseteq \frac{1}{%
2} \mathbf{B}^n$. Then the set $S_d = S \cap \left( \frac{1}{2} \S ^{n-1}
\right)$ contains all the points of $S$ that are endpoints of some diameter,
and thus, the inequality $a_k(S) \leq a_k(\mathbf{B}^n)$ follows by an
argument similar to the one used for smooth bodies.

We are left to show that $a_k(\mathbf{B}^n) = 2k+n-1$, or equivalently, that 
$a_k(\S ^{n-1}) = 2k+n-1$. To show that $a_k(\S ^{n-1}) \geq 2k+n-1$, we
follow the idea of the proof for the usual Borsuk number of $\S ^{n-1}$.

Consider a $k$-fold cover $\mathcal{F} = \{ Q_1, Q_2, \ldots, Q_m \}$ of $\S %
^{n-1}$, with closed sets, such that no element of $\mathcal{F}$ contains a
pair of antipodal points. Let us define the function $f : \S ^{n-1} \to
\Re^{n-1}$ as 
\begin{equation*}
f(x) = (\dist(x,A_1), \dist(x,A_1), \ldots, \dist(x,A_{n-1}) )
\end{equation*}
This function is clearly continuous, and hence, by the Borsuk-Ulam Theorem,
there is a point $p \in \S ^{n-1}$ such that $f(p) = f(-p)$. If some
coordinate of $f(p)$ is zero, then both $p$ and $-p$ are elements of one of the
sets $A_1, A_2, \ldots, A_{n-1}$; a contradiction. Thus, $f(p)$ has no
coordinate equal to zero, which means that neither $p$ nor $-p$ belongs to $A_1 \cup
\ldots A_{n-1}$. Since $p$ and $-p$ are antipodal points, there are at least 
$2k$ elements of $\mathcal{F}$ that contain one of them, which yields that $%
m \geq 2k+n-1$. On the other hand, Gale proved (cf. Theorem II' in \cite%
{Ga56}) the existence of a family of $2k+n-1$ open hemispheres of $\S ^{n-1}$
that form a $k$-fold cover of $\S ^{n-1}$. Since contracting these open
hemispheres one by one yields a $k$-fold cover of $\S ^{n-1}$ with $2k+n-1$
closed spherical caps of radii strictly less than $\frac{\pi}{2}$, we obtain
that $a_k(\mathbf{B}^n) = 2k+n-1$.
\end{proof}

\section{Multiple Borsuk numbers of finite point sets in Euclidean $3$-space}\label{sec:3D}

The fact that the (usual) Borsuk numbers of finite sets in $3$-space are at
most four was first shown by Heppes and R\'ev\'esz in \cite{HR56}, and it
also follows from the proof of V\'azsonyi's conjecture (cf. \cite{H57}, \cite%
{G56}, \cite{KMP10} or \cite{S08}), that stated that in any set $S \subset
\Re^3$ of cardinality $m$, $\diam S$ is attained between at most $2m-2$
pairs of points; or in other words, that the diameter graph of any set of $m$
points in $\Re^3$ has at most $2m-2$ edges. Later we use some of the ideas of these proofs.

A complete characterization of the Borsuk numbers of finite sets in $\Re^3$, even of those
with $a(S)=4$ looks hopeless: indeed, by (2) of Theorem~\ref{thm:smooth},
if $S$ is the vertex set of a Reuleaux-polytope in $\Re^3$, then $a(S)=4$,
and a result of Sallee \cite{S70} yields that the family of
Reuleaux-polytopes in $\Re^3$ is an everywhere dense subfamily of the family
of convex bodies of constant width in $\Re^3$.
Thus, unlike in Section~\ref{sec:planar}, in this and the next sections
we restrict our investigation to point sets with some special properties.

The main goal of this section is to find the finite sets $S \subset \Re^3$
with $a_k(S) = 4k$ for every value of $k$.
We observe that for a finite set $S \subset \Re^3$, Remark~\ref{rem:subadditive}
readily implies that $a_k(S) \leq 4k$ for every value of $k$, where we have equality, for example,
for regular tetrahedra.

Using the next, naturally arising concept, we may rephrase our question in a different form.

\begin{defn}\label{defn:fractionalBorsuk}
Let $S \subset \Re^n$ be of diameter $d > 0$. Then the quantity
\[
a_{frac}(S) = \inf \left\{ \frac{a_k(S)}{k} : k=1,2,3\ldots \right\}
\]
is called the \emph{fractional Borsuk number} of $S$.
\end{defn}

Clearly, $a_{frac}(S) \leq a(S)$ for every set $S$.

\begin{prob}\label{prob:fractional}
Prove or disprove that if $S \subset \Re^3$ is a finite point set with
$a_{frac}(S) = 4$, then its diameter graph contains $K_4$ as a subgraph.
\end{prob}

We give only a partial answer to this problem.
During the investigation, we denote the diameter graph of $S$ by $G_S$, and recall
that the \emph{girth} of a graph $G$ is the length of a shortest cycle in $G$.
We denote this quantity by $g(G)$, and note that
for any finite set $S \subset \Re^3$, we have $a_k(S) = \chi_k(G_S)$,
where $\chi_k(G_S)$ denotes the $k$-fold chromatic number of $G_S$.

Our main result is the following.

\begin{thm}\label{thm:finite4k}
Let $S \subset \Re^3$ be a finite set with $g(G_S) > 3$.
Then $a_k(S) < 4k$ for some value of $k$.
\end{thm}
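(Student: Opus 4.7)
The plan is to translate the statement entirely into graph theory using the paper's own observation that $a_k(S)=\chi_k(G_S)$ for finite $S\subset\Re^3$. Writing $\chi_f(G):=\inf_k \chi_k(G)/k$ for the fractional chromatic number, the conclusion becomes $\chi_f(G_S)<4$. The Heppes--R\'ev\'esz bound $\chi(G_S)\leq 4$ already gives $\chi_f(G_S)\leq 4$, so the real task is to rule out equality under the hypothesis $g(G_S)>3$.

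The main structural input I would use is the V\'azsonyi-type bound $|E(G_S)|\leq 2|S|-2$ for finite diameter graphs in $\Re^3$, cited at the beginning of this section. Together with triangle-freeness it gives two facts: the average degree in $G_S$ is strictly less than $4$, so some vertex has degree at most $3$, and the open neighborhood of every vertex is an independent set.

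I would then split on $\Delta(G_S)$. If $\Delta(G_S)\leq 3$, Brooks' theorem applied to each connected component shows that the component is either $3$-colorable (so $\chi_f\leq 3$) or an odd cycle $C_{2s+1}$ with $s\geq 2$, the assumption $g(G_S)>3$ forbidding $s=1$; for this odd cycle the computation already appearing in the proof of Theorem~\ref{thm:classification} gives $\chi_f(C_{2s+1})=2+1/s\leq 5/2$. Either way $\chi_f(G_S)<4$ and we are done. Otherwise some vertex has degree at least $4$, and I would induct on $|S|$: pick a vertex $v$ with $\deg(v)\leq 3$ (guaranteed by the edge bound), apply the inductive hypothesis to the smaller configuration to obtain a $k$-fold coloring of $G_S-v$ using at most $4k-1$ colors for some $k$, and extend the coloring across $v$.

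The principal obstacle is the extension step when $\deg(v)=3$: \emph{a priori} the three neighbors of $v$ could receive pairwise disjoint $k$-subsets of the $4k-1$ colors, consuming $3k$ colors altogether and leaving only $k-1$ colors available for $v$. I would overcome this by a Kempe-chain/local recoloring argument that exploits the independence of $N(v)$ forced by triangle-freeness: by swapping two colors along an appropriate bichromatic component of $G_S-v$, two of $v$'s three neighbors can be forced to share at least one color, freeing an additional color for $v$. A secondary technical point is that neither the edge bound $|E|\leq 2|V|-2$ nor the property of being a diameter graph in $\Re^3$ is automatically inherited by arbitrary induced subgraphs (the edge bound can fail when a vertex of degree $\leq 1$ is deleted, and the diameter of $S$ may drop when $v$ is removed); this would be handled by framing the induction on a hereditary property such as $3$-degeneracy combined with triangle-freeness, or by geometric bookkeeping that tracks how removing $v$ affects the diameter of $S$.
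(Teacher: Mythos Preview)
Your approach has a genuine gap at the recolouring step, and it diverges from the paper's method in a way that discards the key structural tool.

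The paper does not induct on $|S|$; it uses Dol'nikov's theorem \cite{D00} that any two odd cycles of a diameter graph in $\Re^3$ share a vertex. Thus, if $C$ is a shortest odd cycle (length $2m+1\geq 5$), then $G_S\setminus C$ is bipartite with parts $V_1,V_2$, and since $C$ is shortest and $G_S$ is triangle-free, each vertex of $V_1$ has at most two neighbours on $C$, necessarily nonconsecutive. An explicit $m$-fold $(4m-1)$-colouring is then built: $V_2$ gets one fixed block of $m$ colours; $C$ gets the $m$-fold $(2m+1)$-colouring of Lemma~\ref{lem:finite_oddcoloring}, arranged so that any two nonconsecutive vertices of $C$ share a colour; each $v\in V_1$ then sees at most $2m-1$ colours on $C$, so $m$ colours among $1,\ldots,3m-1$ remain free for it.

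Your Kempe-chain sketch does not go through as stated. That $N(v)$ is independent is needed even to pose the extension problem, but it is not what makes Kempe swaps succeed: in the planar $5$-colour theorem the decisive ingredient is that two Kempe chains cannot cross, and nothing in your setting plays that role. Concretely, you give no reason why, for some pair of neighbours and some pair of colours, the two neighbours must lie in different $\{\alpha,\beta\}$-Kempe components; without such a guarantee every swap may simply permute the three disjoint colour sets without creating any overlap. By passing to the hereditary class ``triangle-free and $3$-degenerate'' you are, in effect, asserting that every such graph has $\chi_f<4$, a purely graph-theoretic statement that you have not proved and that does not follow from the outline you gave. The paper sidesteps this entirely: the intersecting-odd-cycles property is much stronger than $3$-degeneracy and is exactly what makes the explicit colouring work.
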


This theorem may be rephrased in the following form:
For any finite set $S \subset \Re^3$ with $a_{frac}(S)=4$, $G_S$ contains $K_3$ as a subgraph. 
The proof is based on Lemma~\ref{lem:finite_oddcoloring}.

\begin{lem}\label{lem:finite_oddcoloring}
There is an $m$-fold $(2m+1)$-coloring of the $(2m+1)$-cycle $C$ with
the property that any two nonconsecutive vertices have a common color.
\end{lem}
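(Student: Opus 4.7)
The plan is to exhibit an explicit coloring. Label the vertices of $C = C_{2m+1}$ as $v_0, v_1, \ldots, v_{2m}$ cyclically (indices mod $2m+1$), and let the color set be $\mathbb{Z}_{2m+1}$. Assign to $v_i$ the $m$-element subset
\[
S_i \;=\; \bigl\{\, mi,\; mi+1,\; \ldots,\; mi+m-1 \,\bigr\} \pmod{2m+1}.
\]
Since $\gcd(m, 2m+1) = 1$, the map $i \mapsto mi$ is a bijection on $\mathbb{Z}_{2m+1}$, so each $S_i$ indeed has cardinality $m$.

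Next I would verify that adjacent vertices receive disjoint color sets. The sets $S_i$ and $S_{i+1}$ are two cyclic intervals of length $m$ whose starting points differ by $m$, so together they occupy $2m$ consecutive positions in $\mathbb{Z}_{2m+1}$ (one position is left out, namely $mi + 2m \equiv mi - 1$), and hence $S_i \cap S_{i+1} = \emptyset$.

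The main step is to check the intersection condition for non-consecutive vertices. For $v_i$ and $v_j$ with $j - i \not\equiv \pm 1 \pmod{2m+1}$, I would argue: two cyclic arcs of length $m$ in a cycle of length $2m+1$ are disjoint if and only if the shift $m(j-i)$ between their starting points lies in $\{m, m+1\} \pmod{2m+1}$, i.e.\ if and only if $j-i \equiv \pm 1 \pmod{2m+1}$. Since this case is excluded, $S_i \cap S_j \neq \emptyset$, as required.

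I do not anticipate a real obstacle: the only point demanding care is the modular arithmetic showing that the shift $m(j-i)$ produces a non-empty intersection precisely when $v_i, v_j$ are non-consecutive, but this reduces to the observation that two arcs of length $m$ on a cycle of length $2m+1$ leave a complementary gap of only one vertex, so they can be disjoint only when the shift places one immediately after the other.
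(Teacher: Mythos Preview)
Your construction is correct: the cyclic-interval assignment $S_i=\{mi,mi+1,\dots,mi+m-1\}\pmod{2m+1}$ does give a proper $m$-fold $(2m+1)$-coloring of $C_{2m+1}$, and your argument that two length-$m$ arcs on a $(2m+1)$-cycle are disjoint iff the shift between their starting points is $m$ or $m+1$ is exactly what is needed to conclude that $S_i\cap S_j\neq\emptyset$ precisely when $j-i\not\equiv\pm1$.

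The paper takes a different route. Rather than a single symmetric scheme, it superposes $m$ ordinary $3$-colorings: for each odd $t\in\{1,3,\dots,2m-1\}$ it colors $v_t$ with the special color $2m+1$ and colors the remaining vertices alternately with $t$ and $t+1$. The union of these $m$ layers is the desired $m$-fold coloring, and the common-color property is then checked by a parity argument on the two arcs between $v_i$ and $v_j$. Your approach is more uniform and makes the verification a one-line modular computation; the paper's layered description is less symmetric (and for $m\ge3$ genuinely yields a different coloring, not just a relabeling of yours), but has the minor expository advantage that each layer is visibly a proper $3$-coloring, so the ``proper'' part is immediate. Either argument plugs into the subsequent proof of the theorem without change, since only the existence of a common color between nonconsecutive vertices is used there.
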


\begin{proof}[Proof of Lemma~\ref{lem:finite_oddcoloring}]
Let the vertices of $C$ be $v_1,v_2,\ldots,v_{2m+1}=v_0$ in counterclockwise order.
Let the colors be $1,2, \ldots, 2m+1$.
We define a $3$-coloring of $C$ as follows with the colors $t,t+1,2m+1$,
where $t \in \{1,3,\ldots,2m-1 \}$: only $v_t$ is colored with $2m+1$, and the
vertices $v_{t+1}, v_{t+2}, \ldots$ are colored with $t$ and $t+1$, alternately.
It is easy to see that the union of these $m$ $3$-colorings is an $m$-fold
$(2m+1)$-coloring of $C$.

Consider any two vertices $v_i$ and $v_j$ with $|i-j| \geq 2$.
Since $C$ is an odd cycle, exactly one of the two connected components of $C \setminus \{ v_i, v_j\}$
contains an even number of vertices.
If this component does not contain a vertex colored with the color $2m+1$,
then $v_i$ and $v_j$ are $v_{2m-1}$ and $v_1$, both of which are colored with $2m+1$.
If the even component contains the vertex $a_t$ colored with $2m+1$,
then both $v_i$ and $v_j$ are colored either with $t$ or with $t+1$.
\end{proof}

\begin{proof}[Proof of Theorem~\ref{thm:finite4k}]
Let $C$ be a shortest odd cycle in $G_S$, of length $2m+1 \geq 5$.
We show that $\chi_m(G_S) \leq 4m-1$.
Note that if $G_S$ contains no odd cycle, then it is bipartite, and thus, the statement follows from $\chi(G_S) = 2$.

By \cite{D00}, any odd cycle of $G_S$ intersects $C$, or in other words, $G_S \setminus C$ is a bipartite graph.
Let the two parts of $V(G_S \setminus C)$ in this partition be $V_1$ and $V_2$.
Clearly, since $G_S$ contains no triangle, no vertex of $V_1$ is connected to two consecutive
vertices of $C$.
Furthermore, no vertex of $V_1$ is connected to more than two vertices of $C$.
Indeed, if a vertex $v$ is connected to the distinct vertices $v_1, v_2, v_3 \in C$, then there is a path in $C$,
of odd length at most $2m-3$, that connects two of $v_1, v_2$ and $v_3$.
This yields that $C$ is not a shortest odd cycle of $G_S$; a contradiction (for this argument, cf. also \cite{D00}).

Now we define an $m$-fold $(4m-1)$-coloring of $G_S$.
We color each vertex of $V_2$ with the colors $3k,3k+1, \ldots,4k-1$,
and use only the remaining colors for $C \cup V_1$.
We color $C$ in the way described in Lemma~\ref{lem:finite_oddcoloring}, using only the colors $1,2,\ldots,2m+1$.
We color the vertices of $V_1$, using $1,2,\ldots,3m-1$, in the following way.
Consider a vertex $v \in V_1$. Then $v$ is connected to at most two vertices of $C$,
which are not consecutive. Hence, by Lemma~\ref{lem:finite_oddcoloring},
there are at most $2m-1$ colors used for coloring them.
Thus, there are at least $(3m-1)-(2m-1)=m$ colors, from amongst $1,2,\ldots,3m-1$,
that do not color any neighbor of $v$ in $C$.
We color $v$ with $m$ such colors.
\end{proof} 

In the remaining part we show that the statement of Problem~\ref{prob:fractional}
holds for any set $S$ with $\card S \leq 7$.
We start with finding the $4$-critical subsets of
diameter graphs of sets in $\Re^3$ of at most seven points.
Recall that a graph $G$ is $m$-critical if $\chi(G)=m$, and for any proper subgraph $H$ of $G$,
$\chi(H) < m$.

\begin{lem}\label{thm:characterization}
If $S \subset \Re^3$ with $\card S \leq 7$ and $a(S)=4$, and $H$ is a $4$-critical
subgraph of $G_S$, then $H$ is either $K_4$, or the wheel graph $W_6$,
or the Mycielskian $\mu(C_3)$ of the $3$-cycle $C_3$ (cf. Figure~\ref{fig:4critical}).
\end{lem}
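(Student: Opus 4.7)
My plan is to proceed by case analysis on $n := |V(H)|$. Since $H$ is $4$-critical every vertex has degree at least $3$, giving $2|E(H)| \geq 3n$; Vázsonyi's theorem applied to the subset $V(H) \subseteq S \subset \Re^3$ gives $|E(H)| \leq 2n - 2$. Combining these forces $n \geq 4$. Criticality also forces $H = K_4$ as soon as $K_4 \subseteq H$, so for $n \geq 5$ I may and will assume $H$ is $K_4$-free; and since the smallest triangle-free graph with chromatic number $4$ has $11 > 7$ vertices, $H$ must contain a triangle.

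The small cases dispatch quickly. The case $n = 4$ gives $H = K_4$. If $n = 5$ the bounds force $|E(H)| = 8$, so $H = K_5$ minus two edges; if these share a vertex then $H$ contains $K_4$, and if they are vertex-disjoint then their endpoint pairs together with the remaining vertex give three independent sets, yielding a proper $3$-colouring and contradicting $4$-criticality. For $n = 6$, $|E(H)| \in \{9, 10\}$; $|E(H)| = 9$ is excluded because the only $3$-regular graphs on six vertices, $K_{3,3}$ and the triangular prism, have chromatic numbers $2$ and $3$. For $|E(H)| = 10$, Brooks' theorem gives $\Delta(H) \geq 4$, and a short enumeration of the three candidate graphs with degree sequence $(4,4,3,3,3,3)$ (whose complements are $P_6$, $C_4 \cup K_2$, or $C_3 \cup P_3$) shows each is $3$-chromatic; hence $\Delta(H) = 5$, $H$ has a universal vertex $u$, and $H - u$ is a $3$-critical graph on five vertices, forced to be $C_5$, so $H = W_6$.

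The case $n = 7$ is the main obstacle. Let $T$ be a triangle in $H$ and $X := V(H) \setminus T$, so $|X| = 4$. The $K_4$-freeness of $H$ bounds each vertex of $X$ to at most two neighbours in $T$, and Dol'nikov's theorem \cite{D00}, asserting that any two odd cycles in a $3$D diameter graph share a vertex, forces $H[X]$ to be triangle-free. Combining these with the Vázsonyi bound and the minimum-degree condition narrows the possibilities to $|E(H)| = 12$ together with $H[X] \in \{C_4, P_4, K_{1,3}\}$. An explicit check shows that every arrangement with $H[X] = C_4$ admits a proper $3$-colouring, and each of the $P_4$ arrangements either admits a $3$-colouring or produces a pair of vertex-disjoint triangles (one of the form $\{a,b,x\}$ and another of the form $\{c,x',x''\}$), forbidden by Dol'nikov's theorem. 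In the $K_{1,3}$ case, the minimum-degree condition on the three leaves forces each leaf to have exactly two neighbours in $T$, and the requirement that each vertex of $T$ has degree at least $3$ forces the three ``missing'' neighbours in $T$ to be pairwise distinct; this is precisely the Mycielski construction $\mu(C_3)$. The principal difficulty is this last step: the purely combinatorial bounds still leave several candidate $4$-critical graphs on seven vertices with $12$ edges, and Dol'nikov's theorem is essential to rule them out.
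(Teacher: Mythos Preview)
There is a genuine error in the $n=7$ case. Your assertion that ``every arrangement with $H[X]=C_4$ admits a proper $3$-colouring'' is false: the target graph $\mu(C_3)$ itself has $H[X]\cong C_4$ for a suitable choice of the triangle $T$. In $\mu(C_3)$ (base triangle $a,b,c$; Mycielski copies $a',b',c'$; apex $w$) the four triangles are $abc$, $abc'$, $acb'$ and $bca'$; choosing $T=\{a,b,c'\}$ gives $X=\{c,a',b',w\}$, and $H[X]$ is precisely the $4$-cycle $c\,a'\,w\,b'$. So the $C_4$ branch is not vacuous and cannot be discarded by a $3$-colouring argument. A smaller slip occurs in the $K_{1,3}$ branch: if two leaves miss the same vertex of $T$, the third leaf can still supply that vertex with an $X$-neighbour, so all $T$-degrees remain at least $3$; this subcase is in fact eliminated because the resulting graph is $3$-colourable, not by the degree condition you invoke.

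For comparison, the paper avoids this trap by a different device: it appeals to the theorem of Jensen and Shepherd that every $4$-critical graph with a vertex of degree $3$ contains a fully odd subdivision of $K_4$, uses this subdivision (which for $m=7$ has six vertices and eight edges) as the structural skeleton, and then argues directly about how the seventh vertex and the at most four remaining edges can be attached without creating two disjoint triangles. Your triangle-based decomposition can be salvaged, but only once you accept that $\mu(C_3)$ surfaces in the $C_4$ branch as well as in the $K_{1,3}$ branch, depending on which of its four triangles you select as $T$; the $C_4$ case therefore has to \emph{identify} $\mu(C_3)$ rather than rule it out.
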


\begin{figure}[here]
\includegraphics[width=\textwidth]{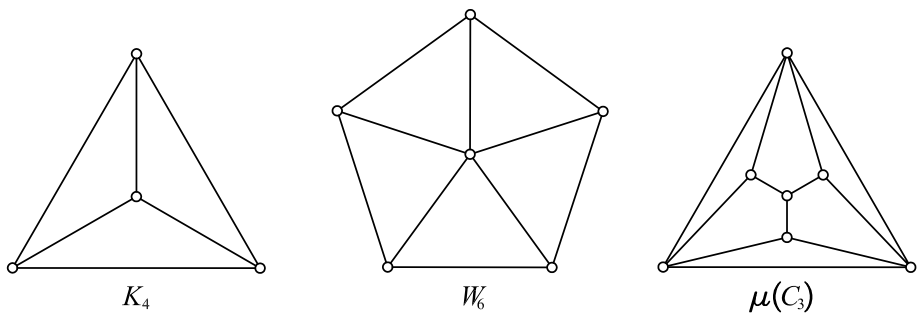}
\caption[]{$4$-critical subgraphs of diameter graphs}
\label{fig:4critical}
\end{figure}

\begin{proof}
By the proof of V\'azsonyi's conjecture, $G_S$ has at most $2 \card S-2$ edges, and by \cite{D00},
any two odd cycles of $G_S$ intersect. Clearly, these properties hold also for all the subgraphs of $G_S$.
Thus, it suffices to prove the following, slightly more general statement:
If $G$ is a $4$-critical graph with at most $m \leq 7$ vertices and at most $2m-2$ edges such that
any two odd cycles of $G$ intersect, then $G$ is either $K_4$, or $W_6$ or $\mu(C_3)$.

Answering a question of Toft \cite{T75} it was
proven in \cite{J95} that if a $4$-critical graph has at
least one vertex of degree $3$, then the graph contains a \emph{fully odd subdivision}
of $K_{4}$ as a subgraph, where a fully odd subdivision of a graph $H$ is a graph, obtained from $H$
in a way that the edges of $H$ are replaced by paths with odd numbers of edges.

Since our graph $G$ has at most $2m-2$ edges and, being $4$-critical, the degree of any vertex is at least $3$,
$G$ has at least four vertices of degree $3$, and hence, by \cite{J95}, it contains a fully odd
subdivision of $K_4$.
If $G$ is not $K_4$, then, as $G$ is $4$-critical,
this subdivision does not coincide with $K_4$, and thus, $m \geq 6$.

We leave it to the reader to show that the only $4$-critical graph with six vertices
and satisfying our conditions is $W_6$.
We deal only with the case $m=7$. For the proof we use the notations in Figure~\ref{fig:subdivision}.
Since $G$ has at most $12$ edges, there are at most four edges not shown in
Figure~\ref{fig:subdivision}.
Note that as $G$ contains no disjoint triangles and the degree of every vertex is at least $3$,
$a_7$ is connected to exactly one of $a_5$ or $a_6$. By symmetry, we may assume that
$a_5a_7$ is an edge, and $a_6a_7$ is not.
This implies also that the degrees of $a_5$, $a_6$ and $a_7$ are $3$, and that $G$ has exactly $12$ edges.

\begin{figure}[here]
\includegraphics[width=0.4\textwidth]{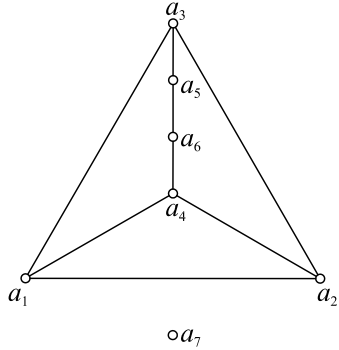}
\caption[]{An illustration for the proof of Lemma~\ref{thm:characterization}}
\label{fig:subdivision}
\end{figure}

By a similar argument, we may obtain that exactly one of $a_1a_6$ and $a_2a_6$ is an edge, say $a_1a_6$.
Then the two additional edges of $G$ connect $a_7$ to two of $a_1,a_2,a_3$ and $a_4$.
It is an elementary exercise to check that if these edges are not $a_2a_7$ and $a_4a_7$, then $G$ is $3$-colorable
or contains disjoint triangles. But if they are $a_2a_7$ and $a_4a_7$, then $G = \mu(C_3)$, which finishes the proof.
\end{proof}

\begin{thm}\label{thm:frac2}
If $S \subset \Re^3$ with $\card S \leq 7$, then either $a_k(S) < 4k$ for every $k > 1$,
or $G_S$ contains $K_4$ as a subgraph.
\end{thm}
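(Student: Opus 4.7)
The plan is to split on the value of $a(S)$. If $a(S) \leq 3$, Remark~\ref{rem:subadditive} immediately gives $a_k(S) \leq k \cdot a(S) \leq 3k < 4k$ for every $k \geq 1$, and we are done. Henceforth assume $a(S) = 4$ while $G_S$ contains no $K_4$. Since $a(S) = \chi(G_S) = 4$, there exists a $4$-critical subgraph $H$ of $G_S$, and by Lemma~\ref{thm:characterization} $H$ must be either $W_6$ or $\mu(C_3)$. The strategy is to first classify $G_S$ itself using V\'azsonyi's bound $|E(G_S)| \leq 2|V(G_S)| - 2 \leq 12$, and then bound $\chi_k(G_S) = a_k(S)$ in each resulting case.

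For the classification, consider first the subcase $H = W_6$. Any chord added to the $5$-cycle of $W_6$ would, together with the hub, produce a $K_4$ in $G_S$, which is forbidden; hence every extra edge of $G_S$ beyond those of $W_6$ must be incident to the (at most one) additional vertex $v \in V(G_S) \setminus V(W_6)$, and V\'azsonyi's bound forces $\deg(v) \leq 2$. Thus the only possibilities here are $G_S = W_6$ or $G_S = W_6 \cup \{v\}$ with $\deg(v) \leq 2$. In the subcase $H = \mu(C_3)$, the subgraph $H$ already saturates both the $7$-vertex and $12$-edge budget, so $G_S = \mu(C_3)$.

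For the coloring bounds in the first two cases, Stahl's formula from \cite{S76} gives $\chi_k(C_5) = 2k + \lceil k/2 \rceil$; since the hub of $W_6$ requires $k$ further colors disjoint from all those used on the cycle, $\chi_k(W_6) = 3k + \lceil k/2 \rceil$, which is strictly less than $4k$ for every $k \geq 2$. When $G_S = W_6 \cup \{v\}$, any optimal $k$-fold coloring of $W_6$ uses $3k + \lceil k/2 \rceil$ colors, and the at most $2k$ colors on the neighbors of $v$ leave at least $k + \lceil k/2 \rceil$ free colors for $v$, so the same bound transfers.

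The main obstacle is the case $G_S = \mu(C_3)$. Here the plan is to produce by hand an explicit $2$-fold $7$-coloring: labelling the vertices of the inner triangle by $u_1, u_2, u_3$, their Mycielskian copies by $u_1', u_2', u_3'$, and the apex by $w$, assign the color sets $\{1,2\}, \{3,4\}, \{5,6\}, \{1,7\}, \{3,7\}, \{5,7\}, \{2,4\}$ respectively and verify disjointness along all twelve edges of $\mu(C_3)$. This yields $\chi_2(\mu(C_3)) \leq 7 < 8$, and Remark~\ref{rem:subadditive} propagates the strict inequality to all $k \geq 2$: for $k = 2m$ we obtain $\chi_k(\mu(C_3)) \leq 7m < 8m = 4k$, while for $k = 2m+1 \geq 3$ we obtain $\chi_k(\mu(C_3)) \leq 7m + 4 < 8m + 4 = 4k$, completing the argument.
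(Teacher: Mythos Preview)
Your proof is correct and follows essentially the same route as the paper: reduce via Lemma~\ref{thm:characterization} to the cases $H\in\{W_6,\mu(C_3)\}$, use the V\'azsonyi edge bound to pin down $G_S$ exactly, and then verify a small multiple chromatic number. The only cosmetic difference is that the paper argues by contradiction from $a_2(S)=8$ and merely asserts $\chi_2(W_6)=\chi_2(\mu(C_3))=7$ (leaving the $W_6\cup\{v\}$ case as an elementary exercise), whereas you compute $\chi_k(W_6)=3k+\lceil k/2\rceil$ directly from Stahl's formula, extend it to $W_6\cup\{v\}$ by the pigeonhole count on free colors, and exhibit an explicit $2$-fold $7$-coloring of $\mu(C_3)$; this makes your write-up a bit more self-contained but does not change the underlying argument.
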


\begin{proof}
If $a_1(S) \leq 3$ or $a_2(S) \leq 7$, then the assertion readily follows from Remark~\ref{rem:subadditive}.
Thus, we may assume that $\card S \leq 7$ and that $a_2(S) = 2a_1(S) = 8$,
or equivalently, that $2 \chi(G_S) = \chi_2(G_S) = 8$.
As a consequence of Lemma~\ref{thm:characterization}, $G_S$ contains
$K_4$, $W_6$ or $\mu(C_3)$ as a subgraph.

If $G_S$ contains $K_4$, we are done.
If $G_S$ contains $\mu(C_3)$ as a subgraph, then $G_S = \mu(C_3)$, since
$\mu(C_3)$ has $7$ vertices and $12$ edges, and by V\'azsonyi's problem
$G_S$ has no more than $12$ edges. It is easy to see that $\chi_2(\mu(C_3)) = 7$,
which immediately implies the assertion.

Now we deal with the case that $G_S$ contains $W_6$ as a subgraph.
If $\card S=6$, then by V\'azsonyi's problem $G_S = W_6$, and thus,
$a_2(S) = \chi_2 (W_6) = 7$.
Assume that $\card S = 7$.
Then there are at most two edges of $G_S$ not contained in the subgraph $W_6$.
We may assume that $W_6$ is an induced subgraph of $G_S$, since
if an additional edge of $G_S$ connects two vertices of $W_6$, then
$G_S$ contains $K_4$ as a subgraph.
Thus, $G_S$ is obtained from $W_6$ by adding an additional vertex, and connecting
it to at most two vertices. Depending on the choice of these vertices, it is an elementary
exercise to find a $2$-fold $7$-coloring of $G_S$ in each case.
\end{proof}

\section{The Borsuk numbers of symmetric finite point sets in $\Re^3$}\label{sec:3Dsym}

In this section, our aim is to examine the Borsuk numbers of finite
point sets in $\Re^3$ with their Borsuk numbers equal to four, and with a nontrivial symmetry group.
Our project is motivated by a result of Rogers \cite{Ro71},
who proved Borsuk's conjecture for $n$-dimensional sets with symmetry groups
containing that of a regular $n$-dimensional simplex.

In our investigation, for a finite set $S \subset \Re^3$, we denote the symmetry group of $S$
by $\Sym(S)$, the symmetry group of a regular tetrahedron by ${\mathcal{T}}_4$,
and that of a regular $k$-gon by ${\mathcal{D}}_k$.

\begin{thm}\label{thm:tetrahedron}
Let $S \subset \Re^3$ be a finite set with ${\mathcal{T}}_4 \subseteq \Sym(S)$.
If $g(G_S) = 3$, then $G_S$ contains $K_4$ as a subgraph.
\end{thm}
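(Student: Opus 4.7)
My plan is to locate a diameter triangle $T^\ast\subseteq S$ whose centroid lies on one of the four $3$-fold rotation axes of $\mathcal{T}_4$, and then to apply the result from \cite{D00} (any two odd cycles of $G_S$ share a vertex) to the $\mathcal{T}_4$-orbit of $T^\ast$ to force $K_4\subseteq G_S$. Let $T=\{p_1,p_2,p_3\}\subseteq S$ be a diameter triangle (which exists because $g(G_S)=3$) and fix any $3$-fold rotation $\rho\in\mathcal{T}_4$.

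First, I would produce $T^\ast$. If $\rho(T)=T$, set $T^\ast:=T$. Otherwise, by \cite{D00} the triangles $T$ and $\rho(T)$ share a vertex $p$, and $p$ cannot lie on the axis of $\rho$ (else $\rho$ would fix $p$ and act on the two remaining vertices of $T$ as an order-$3$ permutation, forcing $\rho(T)=T$). Hence $p,\rho(p),\rho^2(p)$ are three distinct points; since $p\in\rho(T)$ gives $\rho^2(p)=\rho^{-1}(p)\in T$, each of the three pairs $\{p,\rho(p)\}, \{\rho(p),\rho^2(p)\}, \{p,\rho^2(p)\}$ is contained in one of the diameter triangles $\rho(T), \rho^2(T), T$ respectively, so $T^\ast:=\{p,\rho(p),\rho^2(p)\}$ is itself a diameter triangle and $\rho$ cyclically permutes its vertices. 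In particular, the centroid of $T^\ast$ lies on the axis of $\rho$, which passes through a vertex $v_j$ of the reference regular tetrahedron of $\mathcal{T}_4$, and the plane of $T^\ast$ is perpendicular to this axis.

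Next, I would argue that the stabilizer of $T^\ast$ in $\mathcal{T}_4$ is the full copy of $S_3$ stabilizing $v_j$, not merely $\langle\rho\rangle$. This stabilizer is contained in the stabilizer of the axis through $v_j$, which equals $\mathrm{Stab}(v_j)\cong S_3$, and contains $\langle\rho\rangle$, so has order $3$ or $6$. Suppose its order were $3$ and take any reflection $\sigma\in\mathrm{Stab}(v_j)$; then $\sigma(T^\ast)$ is an equilateral triangle of side $d$ with the same centroid, in the same plane perpendicular to the axis, as $T^\ast$. Since both triangles are equilateral of the same circumradius about the same center, $T^\ast$ and $\sigma(T^\ast)$ either coincide or are disjoint. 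If they coincide then $\sigma\in\mathrm{Stab}(T^\ast)$, contradicting the assumption; if they are disjoint, \cite{D00} is violated. Hence $|\mathrm{Stab}(T^\ast)|=6$.

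Finally, I would extract $K_4$ by a coordinate calculation. Taking $v_1=(1,1,1)$ along the axis, the vertices of $T^\ast$ may be written as $(a-2b,a+b,a+b)$, $(a+b,a-2b,a+b)$, $(a+b,a+b,a-2b)$, where $b=d/(3\sqrt 2)$ is fixed by the side length $d$ and $a$ is the signed height of the centroid along the axis. Applying the reflection of $\mathcal{T}_4$ through $\{y+z=0\}$ produces another orbit triangle $T'$ centered on the axis through $v_2$, and \cite{D00} requires $T^\ast\cap T'\neq\emptyset$. A direct computation leaves only two possibilities: $a=-b$, in which case $T^\ast\cup T'$ contains two points at distance $6|b|=d\sqrt 2>d$, violating $\diam S=d$; or $a=b/2$, in which case $T^\ast\cup\{q\}$, where $q$ is the vertex of $T'$ not in $T^\ast$, equals the set $\{-d/(2\sqrt 2)\,v_i:i=1,2,3,4\}$ of vertices of a regular tetrahedron of edge length $d$. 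Since all four of these points lie in $S$ and are pairwise at distance $d$, we obtain $K_4\subseteq G_S$. The main obstacle is this last computation, which simultaneously fixes $a$ and identifies the $K_4$; the first two steps are essentially structural bookkeeping using \cite{D00} and the subgroup lattice of $\mathcal{T}_4\cong S_4$.
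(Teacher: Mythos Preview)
Your overall strategy matches the paper's: find a $\rho$-invariant diameter triangle $T^\ast$, then use further elements of $\mathcal{T}_4$ together with the odd-cycle intersection property of \cite{D00} to pin down $T^\ast$ completely. The difference is in the endgame. The paper first proves the clean structural fact that \emph{every} reflection plane of $\mathcal{T}_4$ contains a vertex of every diameter triangle in $G_S$; combining this with the $\rho$-invariance of $T^\ast$ it places the three vertices of $T^\ast$ on the three remaining $3$-fold axes, and one more symmetry then yields the regular tetrahedron. Your Steps~2--3 replace this by an explicit coordinate computation: you first show $\mathrm{Stab}(T^\ast)=\mathrm{Stab}(v_j)\cong S_3$ (a fact the paper does not isolate, though it is implicit in its argument), which lets you parametrize $T^\ast$ by a single height parameter $a$, and then apply one further reflection and solve. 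Both routes work; yours is more computational but valid, and the case split $a=-b$ versus $a=b/2$ is handled correctly.

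There is, however, a genuine error in Step~1. Your justification that the shared vertex $p\in T\cap\rho(T)$ cannot lie on the axis of $\rho$ is wrong: if $p$ is on the axis and $T=\{p,q,r\}$, then $\rho(T)=\{p,\rho(q),\rho(r)\}$, and nothing forces $\rho(q),\rho(r)\in T$; the two triangles may meet only in the on-axis point $p$, with no contradiction to $\rho(T)\neq T$. The fix is to choose $\rho$ more carefully rather than ``fix any $3$-fold rotation'': one checks (using $\diam S=d$ and, say, the order-$2$ rotations in $\mathcal{T}_4$) that the center $O$ of the reference tetrahedron is not a vertex of any diameter triangle, so each vertex of $T$ lies on at most one of the four $3$-fold axes, and by pigeonhole some axis misses $T$ entirely---take $\rho$ about that axis. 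Then any shared vertex is automatically off-axis and your construction of $T^\ast$ goes through. A second, harmless omission: your parametrization fixes $b=d/(3\sqrt2)>0$, but $\mathrm{Stab}(v_j)$-invariance only gives $|b|=d/(3\sqrt2)$; since the subsequent computation is insensitive to the sign of $b$, this does not affect the conclusion.
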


\begin{proof}
Without loss of generality, let $\diam S = 1$, and let the regular
tetrahedron, with symmetry group ${\mathcal{T}}_4$ and with unit edge
length, be $T$.

Let $a,b,c \subseteq S$ be the vertices of a regular triangle of unit edge
length. Let $M$ be any plane reflection contained in $\Sym (S)$. Then the
points $M(a), M(b), M(c)$ are contained in $S$. By \cite{D00} or \cite{S08},
any two odd cycles of $G_S$ intersect, and thus the sets $\{a,b,c \}$
and $\{ M(a), M(b), M(c) \}$ are not disjoint. If a point is the
reflecion of another one, say $b = M(a)$, then, clearly, $a = M(b)$, and
then $c$ is on the reflection plane; that is, $M(c) = c$. If a point is its
own reflection, then the point is on the reflection plane. Thus, we have
shown that each reflection plane of $\Sym (S)$ contains at least one vertex
from any $3$-cycle in $G_S$.

We leave it to the reader to show that since $\diam S = 1$, then $\{ a,b,c
\} $ does not contain the center of $T$. Thus, there is an axis of rotation in $T_4$
that is disjoint from $\{ a,b,c \}$. Let $R$ be a rotation with angle $\frac{2\pi}{3}$
around this axis. Then the triples $\{ a,b,c \}$ and $\{ R(a), R(b), R(c) \}$
have a point in common; say, $b=R(a)$ (note that no point is the rotated
copy of itself). In this case $\{ a,b,R(b) \}$ is a $3$-cycle in $G_S$ which
is invariant under $R$. As any $3$-cycle has a point on each reflection
plane, it implies that the vertices of this cycle are on the other three axes
of rotation. Applying the symmetries of ${\mathcal{T}}_4$ to these vertices we obtain the
vertices of a regular tetrahedron of unit edge length, which readily implies
the assertion.
\end{proof}

\begin{rem}
Combining Theorems~\ref{thm:tetrahedron} and \ref{thm:finite4k}, we have that
if for some finite set $S \subset \Re^3$ we have ${\mathcal{T}}_4 \subseteq \Sym(S)$,
and $a_k(S) = 4k$ for every $k$, then $G_S$ contains $K_4$ as a subgraph. 
\end{rem}

We note that by \cite{S08}, for every finite set $S \subset \Re^3$, $G_S$  
can be embedded in the projective plane. On the other hand, an example in \cite{S08}
shows that not all these graphs are planar.

\begin{rem}
It is known that the chromatic number of every triangle-free planar graph is at most three.
Thus, Theorem~\ref{thm:tetrahedron} yields that if $G_S$ is planar
and ${\mathcal{T}}_4 \subseteq \Sym(S)$, then $S$ contains $K_4$ as a subgraph.
\end{rem}

\begin{prob}
Prove or disprove that if $S \subset \Re^3$ is a finite set with ${\mathcal{T}}_4 \subseteq \Sym(S)$
and with $a(S) = 4$, then $G_S$ contains $K_4$ as a subgraph.
\end{prob}

Our next aim is to examine sets $S$, with $a(S) = 4$ and with ${\mathcal{D}}%
_{2k+1} \subseteq \Sym S$ for some integer $k \geq 1$. We construct a family
of sets satisfying these conditions.

In the construction we use the notion of the \emph{$p$-Mycielskian} of a graph $G$ (cf. \cite{T01}),
denoted by $\mu_p(G)$.
We regard the \emph{wheel graph $W_{2k+2}$} as the $0$th Mycielskian of the odd cycle $C_{2k+1}$.

\begin{thm}\label{thm:Mycielskian}
For any $p \geq 0$, and $k > 0$, $\mu_p(C_{2k+1})$
is the diameter graph of a finite set $S \subset \Re^3$.
\end{thm}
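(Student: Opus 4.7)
The plan is to prove the theorem by induction on $p$, constructing an explicit point set $S_p \subset \Re^3$ at each step, normalized so that $\diam S_p = 1$.

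For the base case $p=0$, I would take the $2k+1$ vertices of a regular Reuleaux $(2k+1)$-gon of diameter one lying in the horizontal plane $z=0$; their diameter graph is $C_{2k+1}$, with the $k$-step diagonals forming the cycle edges. Their circumradius $r = 1/(2\sin(k\pi/(2k+1)))$ is strictly less than $1$, so the apex $(0,0,\sqrt{1-r^2})$ is at distance exactly $1$ from each polygon vertex, and no other pair reaches distance $1$. The resulting diameter graph is the wheel $W_{2k+2} = \mu_0(C_{2k+1})$.

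For the inductive step, I would write $\mu_p(C_{2k+1})$ in Tardif's layered form: vertex set $\bigsqcup_{j=0}^{p} V_j \cup \{u\}$, each $V_j = \{v_{i,j}\}_{i=1}^{2k+1}$, with intra-layer edges only in $V_0$ (the cycle $C_{2k+1}$), bipartite interlayer edges $v_{i,j}v_{i\pm k,j+1}$ between consecutive layers, and apex $u$ adjacent to all of $V_p$. Starting with the realization of $V_0$ as the Reuleaux polygon, I would build the successive layers by rotating each vertex $v_{i,j-1}$ by a small angle $\theta_j$ about the line through its two cycle-neighbors $v_{i\pm k, j-1}$ in the previous layer; this rotation automatically preserves the distance-one adjacencies required between consecutive layers. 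Finally, I would place the apex on the vertical symmetry axis at the (unique) height where it is at distance exactly $1$ from each vertex of the top layer $V_p$.

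The main technical obstacle will be ensuring that no unwanted pair reaches distance $1$ or exceeds it. For pair distances that were strictly less than $1$ before a rotation, continuity in the angle $\theta_j$ handles the case for small enough $|\theta_j|$. The delicate cases are (a) the within-layer distances in $V_j$ for $j\geq 1$ between vertices whose counterparts in $V_{j-1}$ were at distance $1$, which must now become strictly less than $1$, and (b) the distances between the final apex and the lower (non-adjacent) layers. Case (a) is handled by a second-order analysis using the symmetry of the $(2k+1)$-gon: the two rotation axes associated with the pair $v_{i\pm k, j-1}$ are symmetric about the origin, so the first-order vertical ``lift'' of each rotated vertex cancels when taking the difference of the pair, while the second-order correction pushes each vertex inward toward its axis; this gives the distance decay $1 - c\theta_j^2 + O(\theta_j^4)$ for an explicit $c>0$. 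Case (b) is addressed by alternating the signs of the $\theta_j$ between consecutive layers (so that some layers lift above and others below the plane of $V_0$), which keeps the top layer $V_p$ at a horizontal radius comparable to that of $V_0$ and allows the apex to sit at distance one from $V_p$ yet strictly less than one from every lower layer. With a rapidly decreasing sequence of magnitudes $|\theta_j|$, all remaining cross-layer non-edge distances and apex-to-lower-layer distances stay strictly below $1$, completing the induction.
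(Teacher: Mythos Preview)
Your inductive step has a genuine gap once $p \geq 2$. You obtain $v_{i,j}$ by rotating $v_{i,j-1}$ about the line through $v_{i+k,j-1}$ and $v_{i-k,j-1}$, and claim this ``automatically preserves the distance-one adjacencies required between consecutive layers.'' A rotation about that line preserves distances to points \emph{on} the line, so what it actually gives is
\[
|v_{i,j}-v_{i\pm k,\,j-1}| \;=\; |v_{i,j-1}-v_{i\pm k,\,j-1}|.
\]
For $j=1$ the right-hand side equals $1$ (these are the long diagonals of your base Reuleaux polygon), and the step is fine. For $j\ge 2$, however, the right-hand side is an intra-layer distance inside $V_{j-1}$, and by your own case~(a) analysis this is \emph{strictly less than}~$1$. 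Thus your rotated point $v_{i,j}$ sits at distance strictly less than $1$ from both $v_{i\pm k,\,j-1}$, and the required interlayer edges of $\mu_p(C_{2k+1})$ are simply absent from the diameter graph. The construction realizes $\mu_1(C_{2k+1})$ (your second-order argument for case~(a) is correct there, and the apex must be placed on the side of $V_1$ opposite to $V_0$), but it cannot be iterated further.

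The paper avoids this by building each new layer as a perturbation of the current \emph{apex} rather than of the previous layer. Given a layer $\{p_i\}$ together with an apex $q$ at distance exactly $1$ from every $p_i$, one replaces $q$ by $2k+1$ points $q_i$ chosen near $q$ so that $|q_i-p_j|=1$ if and only if $j=i\pm k$; the existence of such $q_i$ is argued via the inner normals of the lateral faces $p_{i-k}p_{i+k}q$ of the pyramid $\conv\{p_1,\dots,p_{2k+1},q\}$, which single out directions along which the two distances to $p_{i\pm k}$ stay (to first order) at $1$ while all others strictly decrease. A new apex is then placed on the symmetry axis on the \emph{far} side of the $q_i$'s, at distance $1$ from each $q_i$ and strictly less than $1$ from the $p_i$'s. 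This ``split the apex into $2k+1$ points, put a new apex on the other side'' step can be repeated indefinitely, precisely because at every stage the point being perturbed starts at distance exactly $1$ from all of its intended neighbours---the property your layer-rotation scheme loses after the first iteration.
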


\begin{proof}
Let $p_{1},p_{2},\ldots ,p_{2k+1}=p_{0}$ be the vertices of a regular $%
(2k+1) $-gon in the $(x,y)$-plane, centered at the origin. Assume that the
diameter of the point set is $r\leq 1$. Consider the points $q=(0,0,\sqrt{%
1-r^{2}})$ and $r=(0,0,\sqrt{1-r^{2}}-1)$. Note that for every $i$, $%
||p_{i}-q||=1$ and $||p_{i}-r||<1$.

Let $v_i$ denote the inner unit normal vector of the supporting plane of
the pyramid $\conv \{ p_1, \ldots, p_{2k+1}, q\}$
passing through the points $p_{i\pm k}$ and $q$. An elementary computation shows
that $\langle v_i, p_j - q \rangle \geq 0$ for every $i$ and $j$,
and if $j \neq i, i+1$, then we have strict inequality. Thus, for every $i$,
we may choose a point $q_i$ such that the points $q_i$ are the vertices of a
regular $(2k+1)$-gon (and have equal $z$-coordinates), $| p_j - q_i | \leq
1$ with equality if and only if $j = i \pm k$. Furthermore, if the points $%
q_i$ are sufficiently close to $q$, then for the point $r^{\prime }$ on the
negative half of the $z$-axis that satisfies $||q_i-r|| = 1$, we have $||
p_i - r^{\prime }|| < 1$.

Now, to obtain the required $p$-Mycielskian, we start with the wheel graph $%
W_{2k+2}$. This can be realized as the vertex set $V_1$ of a pyramid, with a
regular $(2k+1)$-gon of diameter one as its base, and with the property that
the distance of its apex from any other vertex is one. To obtain a $p$%
-Mycielskian, we may apply the procedure described in the first two
paragraphs $(p-1)$ times.
\end{proof}

\begin{rem}\label{rem:Mycielskianproperties}
Let $S$ be a point set with $G_S = \mu_p(C_{2k+1})$. Then $a(S) = 4$ for every $p$ and $k$.
Hence, since for $k \geq 2$ and $p > 1$ $\mu_p(C_{2k+1})$ is triangle-free,
it is not a planar graph.
On the other hand, it is easy to see that the number of edges in $G_S$
is equal to $2 \card S -2$.
Thus, these sets form an infinite family of nonplanar V\'azsonyi-critical graphs.
\end{rem}

\begin{rem}\label{rem:Mycielskiankfold}
Clearly, if $p=0$, then we have
$a_k(S) = k + \chi_k(C_{2m+1}) = 3k+\lceil \frac{k}{m} \rceil$. By
\cite{L09}, for $p=1$, we have 
\begin{equation*}
a_k(S)= \left\{%
\begin{array}{cl}
4 & \hbox{if } k=1, \\ 
\frac{5k}{2}+1 & \hbox{if } k \hbox{ is even}, \\ 
2k + \frac{k+3}{2}, & \hbox{if } k \hbox{ is odd and } k \leq m \leq \frac{%
3k+3}{2}, \hbox{ and} \\ 
2k + \frac{k+5}{2}, & \hbox{if } k \hbox{ is odd and } m \geq \frac{3k+5}{2}.%
\end{array}
\right.
\end{equation*}
\end{rem}

\begin{thm}\label{thm:regularpolygon}
Let $S \subset \Re^3$ be a finite set with ${\mathcal{D}}_{2m+1} \subseteq \Sym(S)$
for some $m \geq 2$. If $a(S) = 4$ and $g(G_S) = 3$, then $G_S$ contains a topological
wheel graph $W_{2m+2}$ as a subgraph.
\end{thm}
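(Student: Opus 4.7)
The strategy is to use the $(2m+1)$-fold rotation $R\in\mathcal{D}_{2m+1}$, with axis $\ell$, to transport a triangle of $G_S$ into a wheel structure whose hub lies on $\ell$.

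My first step is to show that every triangle $T=\{a,b,c\}$ of $G_S$ has at least one vertex on $\ell$. Suppose otherwise; then $R$ acts freely on each of $a,b,c$, so every ordered pair $(x,y)\in T\times T$ with $x\neq y$ witnesses at most one value of $i$ via $R^i(x)=y$. By the result from \cite{D00,S08} that any two odd cycles of $G_S$ share a vertex, $R^iT\cap T\neq\emptyset$ for every $i\in\{1,\ldots,2m\}$, so at least $2m$ values are witnessed. Since there are only $6$ such ordered pairs, this forces $m\leq 3$; for $m\in\{2,3\}$ the three vertices would lie on a common planar circle perpendicular to $\ell$ at angular positions that are multiples of $2\pi/(2m+1)$, and the chord length $2r\sin(k\pi/(2m+1))$ is strictly increasing in $k$ for $1\leq k\leq m$, so no three such vertices can share a common pairwise distance, contradicting that all three edges of $T$ have length $d$.

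Fix now a triangle $T=\{a,b,c\}$ with $c\in\ell$. Since three points at mutual distance $d$ are not collinear, $a$ and $b$ lie off $\ell$ and hence have $R$-orbits of full size $2m+1$; write $a_i:=R^i(a)$ and $b_i:=R^i(b)$. The rotated triangles give the spokes $\{ca_i,cb_i\}$ and the matching $\{a_ib_i\}$ in $G_S$. If $b$ lies in the $R$-orbit of $a$, say $b=a_k$, the monotonicity of $2r\sin(k\pi/(2m+1))$ in $k$ combined with $|a_0-a_m|\leq\diam S=d=|a-b|$ forces $k=m$ (or $m+1$), because otherwise $|a_0-a_m|>d$. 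As $\gcd(m,2m+1)=1$, the edges $\{a_ia_{i+m}\}$ form a single $(2m+1)$-cycle, and together with the spokes $\{ca_i\}$ this already realizes $W_{2m+2}$ as a (non-topological) subgraph of $G_S$.

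Suppose then that $b$ lies in an $R$-orbit distinct from that of $a$. The edges produced so far form the $3$-chromatic tripartite ``book'' subgraph $\{ca_i,cb_i,a_ib_i\}$, so by $a(S)=4$ there must be further diameter edges. By the monotonicity argument, any diameter internal to the orbit of $a$ or of $b$ is of the form $a_ia_{i+m}$ or $b_ib_{i+m}$, and either case reduces to the previous paragraph applied to that orbit. The remaining possibility is cross-edges $a_0b_j$ with $j\neq 0$, which via $R$ and a reflection in $\mathcal{D}_{2m+1}$ (either one fixing $a_0$, or one introducing a paired reflected orbit when $a_0$ lies off every reflection plane) come in symmetric families; length-two paths through the $b_i$'s (or their reflected copies) then realize a nontrivial circulant on $\{a_i\}$ whose connection set includes some element coprime to $2m+1$, and extracting a Hamilton cycle from this circulant and combining it with the spokes $\{ca_i\}$ gives a topological $W_{2m+2}\subseteq G_S$. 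The main obstacle is this last step: verifying that the extra diameter edges forced by $a(S)=4$ actually produce a connection set containing a generator of $\Z/(2m+1)$, rather than decomposing the rim into several smaller cycles. The resolution requires a careful case analysis of which $\mathcal{D}_{2m+1}$-symmetric configurations of diameter edges can have chromatic number exactly four while leaving every circulant relation on the rim non-coprime to $2m+1$; using the non-aligned sub-case (where $b_0$ lies off every reflection plane, creating a paired $b'$-orbit) together with the structure of odd cyclic groups, one rules out such bad configurations and completes the construction.
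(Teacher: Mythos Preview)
Your argument that every triangle has a vertex on the rotation axis $\ell$ is correct and is a nice alternative to the paper's route (the paper uses Lemma~\ref{lem:Swanepoel}, showing via Swanepoel's double-cover construction that every odd cycle meets every reflection plane, hence every triangle meets the axis since $2m+1\geq 5$ planes pass through $\ell$). Your handling of the case where $b$ lies in the $R$-orbit of $a$ is also fine, and in fact more explicit than the paper's.

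The genuine gap is in the case where the orbits of $a$ and $b$ are distinct. You argue that, since the ``book'' subgraph on $\{c\}\cup\{a_i\}\cup\{b_i\}$ is $3$-colorable while $a(S)=4$, there must be additional diameter edges, and you then try to locate those edges among the $a_i$'s and $b_i$'s and assemble a Hamiltonian rim via a circulant argument. This fails for two reasons. First, $a(S)=4$ says only that $G_S$ is $4$-chromatic; it does not force any further edges inside the particular vertex set $\{c,a_i,b_i\}$, since $S$ may have many other points and the obstruction to $3$-coloring may live entirely elsewhere. Second, even granting extra edges among the $a_i,b_i$, you yourself flag that the circulant step---showing the induced connection set on $\{a_i\}$ contains a generator of $\Z/(2m+1)$---is not actually carried out; the closing sentence is a promissory note, not an argument.

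The paper's resolution of this case bypasses all of that structure. Since $\chi(G_S)=4$, the graph $G_S\setminus\{c\}$ cannot be bipartite, so it contains an odd cycle $C$. By Dol'nikov's theorem, $C$ must intersect every odd cycle of $G_S$, in particular each rotated triangle $\{c,a_i,b_i\}$; as $c\notin C$, the cycle $C$ contains $a_i$ or $b_i$ for every $i$. Now take $c$ as hub, $C$ as (possibly subdivided) rim, and for each $i$ the spoke from $c$ to whichever of $a_i,b_i$ lies on $C$: this is a topological $W_{2m+2}$. No analysis of which specific diameter edges exist is needed.
\medskip
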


In the proof, we use the following lemma.

\begin{lem}\label{lem:Swanepoel}
If $S \subset \Re^3$ is a finite set such that $\Sym%
(S) $ contains a reflection about the plane $H$, then every odd cycle of $%
G_S $ has a vertex on $H$.
\end{lem}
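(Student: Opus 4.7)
The plan is to prove the stronger structural assertion that the induced subgraph $G' = G_S[V(S) \setminus H]$ is bipartite, with the two open half-spaces $H^+, H^-$ cut out by $H$ as the two parts. The lemma follows immediately: an odd cycle of $G_S$ with no vertex on $H$ would be an odd cycle in the bipartite graph $G'$, which is impossible.

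The key claim is that $G_S$ has no edge with both endpoints in $V^+ := S \cap H^+$ (and by symmetry, none with both endpoints in $V^- := S \cap H^-$). Suppose for contradiction that $u, v \in V^+$ with $|u - v| = \diam S$. Choose coordinates so that $H = \{z = 0\}$ and $H^+ = \{z > 0\}$, and write $u = (u', u_3)$, $v = (v', v_3)$ with $u_3, v_3 > 0$. Since $M \in \Sym(S)$, the reflected point $M(v) = (v', -v_3)$ lies in $S$. A direct Pythagorean computation gives
\[
|u - M(v)|^2 - |u - v|^2 = (u_3 + v_3)^2 - (u_3 - v_3)^2 = 4 u_3 v_3 > 0,
\]
so $|u - M(v)| > |u - v| = \diam S$, contradicting $u, M(v) \in S$ and the definition of diameter.

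The only real step is this coordinate computation, expressing the geometric fact that reflecting one endpoint of a same-side chord across $H$ strictly increases the chord length, so no diameter of $S$ can have both endpoints strictly on one side of $H$. Consequently every edge of $G'$ connects $V^+$ to $V^-$, which makes $G'$ bipartite and finishes the proof. No appeal to the Dol'nikov--Swanepoel intersection theorem for odd cycles or to the embedding of $G_S$ in the projective plane is required, and the same argument works verbatim in any dimension.
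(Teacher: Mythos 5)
Your proof is correct, and it takes a genuinely different and substantially more elementary route than the paper. The paper derives the lemma from Swanepoel's machinery: it invokes the centrally symmetric drawing of the bipartite double cover of $G_S$ on $\S^2$, makes a careful equivariant choice of the representing points $p_r$ so that $p_r$ lies on the central plane $H'$ exactly when $p\in H$, and then uses the topological intersection of the curves representing an odd cycle $C$ and its mirror image $C'$ together with Lemmas 1 and 2 of \cite{S08} to locate a common vertex on $H$. You instead prove the stronger and cleaner structural fact that no diameter of $S$ can have both endpoints strictly on one side of $H$ (the computation $|u-M(v)|^2-|u-v|^2=4u_3v_3>0$ is right, and the degenerate cases $u=M(v)$ or $M(v)=v$ are excluded since $u_3,v_3>0$), whence the subgraph induced on $S\setminus H$ is bipartite with parts $S\cap H^+$ and $S\cap H^-$ and carries no odd cycle. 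Your argument is dimension-free, needs no finiteness beyond the diameter being attained, and avoids the projective/spherical representation entirely; it is essentially the classical observation underlying Rogers-type results for symmetric sets. The only thing the paper's heavier approach buys here is uniformity with the rest of Section~5, where the Swanepoel framework is already in play; for this lemma in isolation your proof is preferable.
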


\begin{proof}
Swanepoel (cf. Theorem 2 of \cite{S08}) showed that for any $S\subset \Re^{3}$,
$G_{S}$ has a bipartite double cover, with a centrally symmetric
drawing on $\S ^{2}$: in this drawing, any point $p$ is represented by a
pair of antipodal points $p_{b}$ and $p_{r}=-p_{b}$ that are colored
differently, and a diameter of $S$, connecting $p$ and $q$, corresponds to the two edges
$p_b q_r$ and $p_r q_b$. In his construction, the point $p_r$ representing $p$
is an arbitrary relative interior point of the conic hull of the diameters of $S$ starting at $p$.  
Using the geometric properties of the conic hulls of the diameters,
he concluded that any two odd cycles of $G_{S}$,
which are represented by centrally symmetric closed curves on $\S ^{2}$,
have a common vertex.

Now consider the plane $H^{\prime }$, parallel to $H$ and containing $o$.
We apply the construction of Swanepoel with a special choice of points.
For any $p \in S$, let $CH_p$ denote the conic hull of the diameters of $S$, starting at $p$.
Then we choose $p_r \in \S^2$ as the projection of the center
of gravity of $\B^3 \cap CH_p$ on $\S^2$ from $o$.
Clearly, $p_r$ is on $H^{\prime }$ if, and only if $p$ is on $H$.

Consider an odd cycle $C$ in $G_S$. If its vertex set is symmetric about $H$, then
$H$ contains one of the vertices.
Assume that $C$ is not symmetric about $H$, and let $C'$ denote its reflected copy about $H$.
Clearly, the curves representing $C$ and $C'$ on $\S^2$ are symmetric about
$H^{\prime }$, and thus, they intersect on $H'$.
By Lemmas 1 and 2 of \cite{S08}, these common points belong to common vertices of $C$ and $C'$,
which yields that both cycles have a vertex on $H$.
\end{proof}

\begin{proof}[Proof of Theorem~\protect\ref{thm:regularpolygon}]
Assume that $\diam S = 1$.

By Lemma~\ref{lem:Swanepoel}, any odd cycle, and in particular any triangle,
of $G_S$ contains a point on each plane of symmetry in $\Sym(S)$. Since $\Sym%
(S)$ contains at least $2m+1 \geq 5$ symmetry planes, any triangle $T$ of $%
G_S$ has a vertex on the axis $L$ of the rotations of ${\mathcal{%
D}}_{2m+1}$. Clearly, this triangle $T$ has at most two vertices on $L$.
If $T$ has exactly two vertices on $L$, then the diameter of the union of the rotated copies of $T$
is stricly greater than one; a contradiction. Thus, we have that $T$ has exactly one
vertex on $L$, which we denote by $a$. Let the remanining two vertices of $T$ be $b$ and $c$.
Let $b=b_{1},b_{2},\ldots ,b_{2m+1}$, and $c=c_{1},c_{2},\ldots ,c_{2m+1}$
denote the rotated copies of $b$ and $c$, respectively, about $L$.

First, consider the case that the points $b_{i}$ and $c_{j}$ are pairwise
distinct. Let $C$ be a shortest odd cycle that does not contain $a$. Such a
cycle exists, as otherwise $G\setminus \{a\}$ contains no odd cycle, and $%
\chi (G)=3$. Since any two odd cycles intersect, $C$ contains at least one
point from each pair $\{b_{i},c_{i}\}$.
Thus, the required subgraph is defined as the union of $C$, $a$, and for each 
$i$ an edge connecting $a$ to either $b_{i}$ or $c_{i}$ on $C$.

Finally, assume that from amongst the $b_i$s and the $c_j$s there are
coinciding vertices. Note that since they are not on $L$, we have that $b_i
= c_j$ for some $i$ and $j$. But then $\{ b_1, b_2, \ldots, b_{2m+1} \}= \{
c_1, c_2, \ldots, c_{2m+1} \}$, and these vertices, and $a$,
are the vertices of a subgraph $W_{2m+2}$.
\end{proof}

\begin{cor}
Let $S \subset \Re^3$ be a finite set with ${\mathcal{D}}_{2m+1} \subseteq \Sym(S)$
for some $m \geq 2$. If $a(S) = 4$ and $G_S$ is a plane graph, then $G_S$ contains a topological
wheel graph $W_{2m+2}$ as a subgraph.
\end{cor}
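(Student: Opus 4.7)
The plan is to observe that the corollary is essentially a one-line reduction to Theorem~\ref{thm:regularpolygon} via the preceding remark that invokes Gr\"otzsch's theorem. Under the hypotheses, $a(S) = 4$ means $\chi(G_S) = 4$. If $G_S$ were triangle-free, i.e.\ $g(G_S) > 3$, then, being planar, $G_S$ would have chromatic number at most $3$ by Gr\"otzsch's theorem (as already recalled in the remark just before Theorem~\ref{thm:tetrahedron}). This contradicts $\chi(G_S) = 4$, so necessarily $g(G_S) = 3$.

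Having established $g(G_S) = 3$, the remaining hypotheses of Theorem~\ref{thm:regularpolygon} (namely ${\mathcal{D}}_{2m+1} \subseteq \Sym(S)$ for some $m \geq 2$, and $a(S) = 4$) hold by assumption. Applying that theorem directly yields a topological copy of $W_{2m+2}$ in $G_S$, which is exactly the desired conclusion.

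There is no real obstacle here: the only nontrivial ingredient, Gr\"otzsch's theorem, has already been invoked in the earlier remark preceding the corollary, and the geometric content of finding $W_{2m+2}$ from a triangle in $G_S$ is packaged inside Theorem~\ref{thm:regularpolygon}. In writing up the proof, I would keep it to a couple of sentences, merely noting the chromatic argument that forces $g(G_S) = 3$ and then citing Theorem~\ref{thm:regularpolygon}.
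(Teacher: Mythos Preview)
Your argument is correct and is precisely the reasoning the paper intends: the corollary is stated without proof, and the immediately relevant tools are Gr\"otzsch's theorem (recalled in the remark \emph{following} Theorem~\ref{thm:tetrahedron}, not preceding it) together with Theorem~\ref{thm:regularpolygon}. Aside from that minor locational slip, there is nothing to add.
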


\begin{prob}
Is it true that if $S \subset \Re^3$ is a finite set
with ${\mathcal{D}}_{2m+1} \subseteq \Sym(S)$ for some $m \geq 2$, and with $a(S) = 4$,
then $G_S$ contains $\mu_p(C_{2t+1})$ as a subgraph, for some $p$ and $t$ satisfying $(2m+1) | (2t+1)$?
If the answer is negative, is it true for V\'azsonyi-critical graphs?
\end{prob}

\section{An additional remark}\label{sec:remarks}

Let $a_k(n)$ denote the maximum of the Borsuk numbers of $n$-dimensional sets of positive diameter,
and let $a(n) = a_1(n)$.
One of the fundamental questions regarding Borsuk's problem is to determine
the asymptotic behavior of $a(n)$.

Presently, the best known asymptotic lower bound for 
$a(n)$ is due to Raigorodskii \cite{R99},
who proved that for sufficiently large values of $n$, 
\[
a(n) \geq \left( \left( \frac{2}{\sqrt3} \right)^{\sqrt2} \right)^{\sqrt{n}}= (1.225...)^{\sqrt{n}}:
\]
he constructed a finite $n$-dimensional set $S$
with the property that the independence 
number of its diameter graph is not greater
than $\frac{\card S}{ 1.225^{\sqrt{n}}}$, if $n$ is sufficiently large.
Clearly, by Remark~\ref{rem:independence}, this property implies not only that 
$a(S) \geq 1.225^{\sqrt{n}}$ for large values of $n$,
but also that $a_{frac}(S) \geq 1.225^{\sqrt{n}}$.
Thus, we have the following.

\begin{rem}
If $n$ is sufficiently large, then for every value of $k$, 
we have $a_k(n) \geq k 1.225^{\sqrt{n}}$.
\end{rem}

\begin{prob}
Is it true that for every value of $k$ and $n$, 
we have $a_k(n) = k a(n)$? If not,
do the two sides have the same magnitude?
\end{prob}

\end{document}